\documentclass[reqno,12pt]{amsart}
\usepackage{amsmath,amsthm,enumerate,cite,graphics,pstricks,amsfonts,latexsym,amsopn,verbatim,amscd,amssymb}
\usepackage{color}
\usepackage{psfrag}
\usepackage[all]{xy}

\textwidth=425pt
\oddsidemargin=16pt
\evensidemargin=16pt

\theoremstyle{plain}
\newtheorem{thm}{Theorem}[section]

\newtheorem{lem}[thm]{Lemma}
\newtheorem{cor}[thm]{Corollary}
\newtheorem{prop}[thm]{Proposition}
\newtheorem{conj}[thm]{Conjecture}

\theoremstyle{definition}

\newtheorem{rem}[thm]{Remark}

\newcommand{\A}{\mathcal{A}}
\newcommand{\B}{\mathcal{B}}

\DeclareMathOperator{\Cent}{Cent}

\begin{document} 

\title[Counting centralizers and $z$-classes of some F-groups]{Counting centralizers and $z$-classes of some F-groups} 

\author[S. J. Baishya  ]{Sekhar Jyoti Baishya} 
\address{S. J. Baishya, Department of Mathematics, Pandit Deendayal Upadhyaya Adarsha Mahavidyalaya, Behali, Biswanath-784184, Assam, India.}

\email{sekharnehu@yahoo.com}

\begin{abstract}
A finite group $G$ is called an F-group  if for every $x, y \in G \setminus Z(G)$, $C(x) \leq C(y)$ implies that $C(x) = C(y)$. On the otherhand, two elements of a group are said to be $z$-equivalent or in the same $z$-class if their centralizers are conjugate in the group. In this paper, for a finite non-abelian group, we give  a necessary and sufficient condition for the number of centralizers/ $z$-classes to be equal to the index of its center. We also give a necessary and sufficient condition for the number of $z$-classes of a finite F-group to attain its maximal number (which extends an earlier result). Among other results, we have computed the number of element centralizers and $z$-classes of some  finite groups and extend some previous results.
\end{abstract}

\subjclass[2010]{20D60, 20D99}
\keywords{Finite group, Centralizer, Partition of a group, $z$-class}
\maketitle

\section{Introduction} \label{S:intro}

In 1953, Ito \cite {ito} introduced the notion of the class of F-groups, consisting of finite groups $G$ in which for every $x, y \in G \setminus Z(G)$, $C(x) \leq C(y)$ implies that $C(x)=C(y)$, where $C(x)$ and $C(y)$ are centralizers of $x$ and $y$ respectively. Since then the influence of the element centralizers on the structure of groups has been studied extensively. An interesting subclass of F-groups is the class of I-groups, consisting of groups in which all centralizers of non-central elements are of same order. Ito in \cite {ito} proved that I-groups are nilpotent and direct product of an abelian group and a group of prime power order. Later on  in 2002, Ishikawa \cite{ish} proved that I-groups are of class at most $3$. In 1971,  Rebmann \cite {reb} investigated and classified F-groups.

In 2007, R. S. Kulkarni \cite{kul, kul1} introduced the notion of $z$-classes in a group. Two elements of a group are said to be $z$-equivalent or in the same $z$-class if their centralizers are conjugate in the group.  $z$-equivalence is an equivalence relation which is weaker than conjugacy relation. An infinite group generally contains infinitely many conjugacy classes, but may have finitely many $z$-classes. In \cite{kul1} the author observed the influence of the $z$-classes in the groups of automorphisms of classical geometries and apart from other results he concludes that this finiteness of $z$-classes can be related to the idea of finiteness of dynamical types of transformation to the geometry. It may be mentioned here that apart from the geometric motivation,  finding $z$-classes of a group  itself is of independent interest as a pure combinatoral problem.  More information on this and related concepts may be found in \cite{zclass, gon1, gon2, gon3, rahul1, rahul3, rahul4}.

In a recent work, the authors in \cite{rahul1} investigated $z$-classes in finite $p$-groups ($p$ a prime). Among other results, they proved that a non-abelian $p$-group $G$ can have at most $\frac{p^k-1}{p-1}+1$ number of $z$-classes, where $\mid \frac{G}{Z(G)}\mid=p^k$ and gave a necessary condition to attain the maximal number which is not sufficient. Recently, the authors in \cite{zclass} gave a necessary and sufficient condition for a finite $p$-group of conjugate type $(n, 1)$ to attain this maximal number. In this paper, apart from other results, we extend this result and give a necessary and sufficient condition for a finite F-group to attain this maximal number. For a finite non-abelian group, we give  a necessary and sufficient condition for the number of centralizers/ $z$-classes to be equal to the index of its center. We disprove a Conjecture in \cite{amiriF1}, namely, if $G$ is a finite  group such that the number of centralizers is equal to the index of its center, then $G$ is an F-group. Among other results, we have computed the number of element centralizers and  $z$-classes of some  groups and improve some earlier results. It may be mentioned here that characterization of groups in terms of the number of element centralizers have been considered by many researchers (see for example \cite{ctc092, zarrin094} for finite groups and \cite{non} for infinite groups).

Throughout this paper $G$ is a  group with center $Z(G)$, commutator subgroup $G'$ and the set of element centralizers $\Cent(G)$. We write $Z(x)$ to denote the center of the proper centralizer $C(x)$ and `$z$-class' to denote the set of $z$-classes in $G$.

 \section{Preliminaries}

 We begin with some  Remarks which will be used in the sequel.

\begin{rem}\label{rem1}
 (See \cite[Pp. 571--572]{zappa}) A collection $\Pi$ of non-trivial subgroups of a finite group $G$ is called a partition if every non-trivial element of $G$ belongs to a unique subgroup in $\Pi$. If $\mid \Pi \mid=1$, the partition is said to be trivial. The subgroups in $\Pi$ are called components of $\Pi$. Following Miller, any finite abelian group having a non-trivial partition is an elementary abelian $p$-group of order $\geq p^2$ ($p$ a prime).

Let $S$ be a subgroup of a finite group $G$. A set $\Pi= \lbrace H_1, H_2, \dots, H_n \rbrace$ of subgroups of $G$ is said to be a strict $S$-partition of $G$ if $S \leq H_i$ ($i=1, 2, \dots, n$) and every element of $G\setminus S$ belongs to one and only one subgroup  $H_i$ ($i=1, 2, \dots, n$). For more information about partition see \cite{part}.

Given a finite group $G$, let $\A= \lbrace C(x) \mid x \in G \setminus Z(G) \rbrace$ and $\B= \lbrace Z(x) \mid x \in G \setminus Z(G) \rbrace$. $\A$ and $\B$ are partially ordered sets with respect to inclusion and they have the same length. The length of $\A$ (and of $\B$) is called the rank of $G$. A finite group $G$ has rank $1$ if and only if $\B$ is a strict $Z(G)$-partition.

Recall that a finite group $G$ is called an F-group  if for every $x, y \in G \setminus Z(G)$, $C(x) \leq C(y)$ implies that $C(x) = C(y)$. Following \cite[Lemma 2.6]{brough}, a finite  group $G$ is an F-group if and only if $\B$ is a strict $Z(G)$-partition. 

Hence being a finite group of rank $1$ is equivalent to being a finite F-group.
\end{rem}

\begin{rem}\label{rem2}
Given a group $G$, two elements $x, y \in G$ are said to be $z$-equivalent or in the same $z$-class if their centralizers are conjugate in $G$, i.e., if $C(x)=gC(y)g^{-1}$ for some $g \in G$.  It is well known that ``being $z$-equivalent" is an equivalence relation on $G$. Following \cite[Theorem 2.1]{kul1}, the order of the $z$-class of $x$, if finite, is given by 
\[
\mid z-class \; of\; x \mid =\mid G : N_G(C(x)) \mid . \mid F_x' \mid,
\]
where $F_x'= \lbrace y \in G \mid C(x)=C(y)\rbrace$. From this it is easy to see that the number of $z$-classes in $G$ i.e., $\mid z-class \mid \leq \mid \Cent(G) \mid$, with equality  if and only if $C(x)\unlhd G$ for all $x \in G $.
\end{rem}

The following Theorems will be used to obtain some of our results. For basic notions of isoclinism, see \cite{hall, rahul1}.

\begin{thm}{(p.135 \cite{hall})} \label{ff1}
Every group is isoclinic to a group whose center is contained in the commutator subgroup. 
\end{thm}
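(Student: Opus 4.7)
The plan is to construct $H$ as a suitable central quotient of $G$. Given $G$, I would look for a subgroup $A\leq Z(G)$ with $A\cap G'=1$ and $Z(G)=A\cdot(Z(G)\cap G')$, and then set $H=G/A$. The first condition guarantees that $G$ is isoclinic to $G/A$, while the second forces $Z(H)\leq H'$.

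The key steps, in order, would be as follows. First, for any $A\leq Z(G)$ with $A\cap G'=1$, I would verify that the natural projection $G\to G/A$ induces an isoclinism: by the second isomorphism theorem $(G/A)'=G'A/A\cong G'/(G'\cap A)=G'$, and $Z(G/A)=Z(G)/A$, because $xA\in Z(G/A)$ gives $[x,g]\in A\cap G'=1$ for every $g\in G$, forcing $x\in Z(G)$. The compatibility of the commutator maps in the isoclinism pair $(\alpha,\beta)$ is then immediate from the construction. Next, assuming in addition that $Z(G)=A\cdot(Z(G)\cap G')$, I would observe that $Z(G/A)=Z(G)/A=(A\cdot(Z(G)\cap G'))/A\leq G'A/A=(G/A)'$, which gives the desired stem property. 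Finally, I would need to produce such an $A$, which amounts to choosing a direct complement of $Z(G)\cap G'$ inside the abelian group $Z(G)$.

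The main obstacle is this last step: such a direct complement need not exist. The simplest obstruction is already visible when $Z(G)\cong \mathbb{Z}/4$ and $Z(G)\cap G'\cong \mathbb{Z}/2$, where no subgroup of $Z(G)$ is a direct complement of $Z(G)\cap G'$, so the naive quotient construction breaks down. To handle the general case, I would replace the direct quotient approach by an argument on the whole isoclinism class: for finite $G$, pick a representative $H$ of minimum order in the isoclinism class of $G$, and argue by contradiction that $Z(H)\leq H'$, since otherwise a central reduction would yield a strictly smaller isoclinic group, violating minimality. The delicate point is that this reduction may not be obtainable from $H$ itself by a single central quotient; one may have to pass through an auxiliary isoclinic group to realize the smaller representative, and it is precisely this extra flexibility within the isoclinism class that Hall exploits in his original proof.
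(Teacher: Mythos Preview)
The paper does not give a proof of this statement. Theorem~\ref{ff1} is simply quoted from Hall's 1940 paper and used as a black box in later arguments, so there is no proof in the paper against which to compare your proposal.

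As for the proposal itself, your outline is sound where it is explicit. The two verifications---that $G\to G/A$ with $A\leq Z(G)$ and $A\cap G'=1$ is an isoclinism, and that the additional hypothesis $Z(G)=A\,(Z(G)\cap G')$ forces $Z(G/A)\leq (G/A)'$---are correct and standard. You have also put your finger on the real difficulty: a complement to $Z(G)\cap G'$ in $Z(G)$ need not exist, and your $C_4$-over-$C_2$ example is exactly the right obstruction. Your proposed remedy for finite $G$, passing to a representative of minimal order in the isoclinism family, is the textbook route; note, however, that to turn it into an actual proof one must first \emph{know} that a stem representative exists (so that the minimum equals $|G/Z(G)|\cdot|Z(G)\cap G'|$), or else produce, from a non-stem $H$ of minimal order, a strictly smaller isoclinic group---and the latter step runs straight back into the same complementation problem. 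Hall's original argument avoids this circularity by constructing a stem group directly from the isoclinism data $(G/Z(G),\,G',\,[\cdot,\cdot])$, which is the ``extra flexibility'' you allude to in your last sentence. So your plan is accurate as a roadmap, with the genuine work deferred to the cited source, which is also exactly what the paper does.
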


\begin{thm}{(Theorem A \cite{rahul2}, {Lemma 3.2 \cite{non}})} \label{ff2}
Any two isoclinic groups have the same number of centralizers.
\end{thm}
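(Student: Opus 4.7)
The plan is to transport centralizers across the isoclinism data. Recall that an isoclinism between $G$ and $H$ is a pair of isomorphisms $\alpha\colon G/Z(G)\to H/Z(H)$ and $\beta\colon G'\to H'$ satisfying $\beta([x,y])=[x',y']$ whenever $\alpha(xZ(G))=x'Z(H)$ and $\alpha(yZ(G))=y'Z(H)$. My goal would be to produce an explicit bijection from $\Cent(G)$ to $\Cent(H)$ induced by $\alpha$.

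First I would observe that since $Z(G)\leq C(x)$ for every $x\in G$, the assignment $C(x)\mapsto C(x)/Z(G)$ is a bijection between $\Cent(G)$ and the family $\mathcal{C}_G=\{C(x)/Z(G):x\in G\}$ of subgroups of $G/Z(G)$, and similarly for $H$. It then suffices to show that $\alpha(\mathcal{C}_G)=\mathcal{C}_H$. Fix $x\in G$ and pick $x'\in H$ with $\alpha(xZ(G))=x'Z(H)$; for any $y\in G$ set $y'Z(H)=\alpha(yZ(G))$. Then $yZ(G)\in C(x)/Z(G)$ iff $y\in C(x)$ iff $[x,y]=1$, which, since $\beta$ is injective, is equivalent to $\beta([x,y])=1$, and by the isoclinism identity to $[x',y']=1$, i.e.\ to $\alpha(yZ(G))\in C(x')/Z(H)$. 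Hence $\alpha(C(x)/Z(G))=C(x')/Z(H)$, and the same reasoning applied to $\alpha^{-1}$ yields the reverse inclusion. Combining the two steps gives $|\Cent(G)|=|\mathcal{C}_G|=|\mathcal{C}_H|=|\Cent(H)|$.

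The main subtlety, though not really an obstacle, is verifying independence of the chosen coset representatives $x'$ and $y'$; this is automatic because $Z(H)\leq C(x')$ makes centralizer membership a property of cosets modulo $Z(H)$, so the argument is insensitive to the choice. The substantive ingredient is the commutator-preservation axiom of isoclinism, which is precisely what converts the bijection $\alpha$ of central quotients into a bijection on the level of centralizer families.
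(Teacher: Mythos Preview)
Your argument is correct. The paper does not actually prove this theorem; it is quoted as a known result with attribution to \cite{rahul2} and \cite{non}, and is used as a black box in later arguments. Your proof supplies exactly the standard direct verification: using that $Z(G)\le C(x)$ to pass to $G/Z(G)$, and then the commutator compatibility of the isoclinism $(\alpha,\beta)$ together with injectivity of $\beta$ to transport the condition $[x,y]=1$ across to $H$. One minor remark: your chain of ``iff'' already establishes the equality $\alpha(C(x)/Z(G))=C(x')/Z(H)$ outright, so the sentence about applying $\alpha^{-1}$ for a reverse inclusion is redundant, though harmless.
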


\begin{thm}{(Theorem 11 \cite{rahul2}, Theorem 3.3 \cite {non})} \label{ff3}
The representatives of the families of isoclinic groups with $n$-centralizers ($n \neq 2, 3$) can be chosen to be finite groups.
\end{thm}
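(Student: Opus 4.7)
The plan is to combine the two preceding theorems with a finiteness argument and reduce the problem to standard results about central quotients. Start with any (possibly infinite) group $G$ in an isoclinism family with $|\Cent(G)| = n$. By Theorem \ref{ff1} I would replace $G$ by an isoclinic group $H$ with $Z(H) \leq H'$, and by Theorem \ref{ff2} this new $H$ still has $|\Cent(H)| = n$. The whole task then reduces to showing that such an $H$ must be finite.

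The case $n = 1$ is trivial: $H$ is abelian, so one can take $H$ to be the trivial group. For $n \geq 4$, my first step would be to establish that $[H:Z(H)] < \infty$. Note that central elements lie in every centralizer, so the assignment $h \mapsto C_H(h)$ factors through $H/Z(H)$; but I need the converse direction. To get it, observe that $H$ is covered by the $n-1$ proper centralizers $C(x_1), \ldots, C(x_{n-1})$, and B.~H.~Neumann's covering theorem guarantees that at least one of them has finite index in $H$. A short upgrade, using that each fibre $F_{x_i} = \{h \in H : C(h) = C(x_i)\}$ is a union of $Z(H)$-cosets and that there are only $n-1$ such fibres outside $Z(H)$, then forces $H/Z(H)$ itself to be finite.

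Once $[H:Z(H)] < \infty$, Schur's theorem gives $|H'| < \infty$. Combined with the arrangement $Z(H) \leq H'$, this yields $|Z(H)| < \infty$, and then $|H| = [H:Z(H)]\cdot|Z(H)| < \infty$, completing the proof. The excluded values $n = 2, 3$ simply reflect the classical fact that no group has exactly two or three distinct element centralizers, so the statement is vacuous there.

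The main obstacle is the step showing that finitely many centralizers force a finite central quotient; this is the genuinely nontrivial input, since Neumann's theorem by itself only provides one subgroup of finite index and one must bootstrap from there. The remainder is a clean chain of classical facts (isoclinism reduction, Schur's theorem), which go through essentially automatically once the central-quotient finiteness is in hand.
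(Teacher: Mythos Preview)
The paper does not actually prove Theorem~\ref{ff3}; it merely quotes it from \cite{rahul2} and \cite{non} and uses it as a black box. So there is no ``paper's own proof'' to compare against. Your overall architecture---pass via Theorem~\ref{ff1} to an isoclinic $H$ with $Z(H)\le H'$, keep $|\Cent(H)|=n$ by Theorem~\ref{ff2}, prove $[H:Z(H)]<\infty$, invoke Schur to get $|H'|<\infty$, and conclude $|H|<\infty$---is exactly the route taken in those references and is sound.

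There is, however, a genuine gap in your ``short upgrade''. Knowing that the fibres $F_{x_i}=\{h:C(h)=C(x_i)\}$ are unions of $Z(H)$-cosets and that there are only $n-1$ of them does \emph{not} by itself force $H/Z(H)$ to be finite: each fibre could still contain infinitely many $Z(H)$-cosets. Neumann's theorem applied to the cover $H=\bigcup_i C(x_i)$ only tells you that the $C(x_i)$ of infinite index may be discarded, and nothing in your outline explains why the intersection of the remaining ones lands inside $Z(H)$.

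The standard fix is to cover $H$ not by the centralizers $C(x_i)$ but by their centres: since $h\in Z(C(h))$ for every $h$, one has $H=\bigcup_{i=0}^{n-1} Z(C_i)$ with each $Z(C_i)$ abelian. Apply Neumann's theorem to \emph{this} cover and let $Z(C_{1}),\dots,Z(C_{k})$ be the finite-index members that still cover $H$; then $A:=\bigcap_{j\le k} Z(C_j)$ has finite index. Because $Z(C_j)\subseteq C_j$, the same $C_1,\dots,C_k$ cover $H$, and any $a\in A$ commutes with every element of each $C_j$, hence with all of $H$; thus $A\subseteq Z(H)$ and $[H:Z(H)]<\infty$. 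With this correction your plan goes through, and the remaining steps (Schur, $Z(H)\le H'$, finiteness of $H$) are exactly as you describe.
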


\begin{thm}{(Lemma 4, p. 303 \cite{zumud})} \label{ff4}
Let $G$ be a finite non-abelian group with an abelian normal subgroup of prime index $p$.  Then $\mid G \mid=p.\mid Z(G)\mid. \mid G' \mid$.
\end{thm}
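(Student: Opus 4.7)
The plan is to derive the identity from a single commutator homomorphism on the given abelian subgroup. Let $A$ denote the abelian normal subgroup of prime index $p$ in $G$, and fix $x \in G \setminus A$, so that $G = A \langle x\rangle$. (One checks at the outset that the stated identity implicitly excludes the abelian case, since if $G$ itself were abelian the equality would read $|G| = p|G|$.) Assuming $G$ is non-abelian, I would first show $Z(G) \subseteq A$: otherwise, $[G:A]=p$ being prime forces $Z(G)A = G$, which together with $A$ abelian would make $G$ abelian. Then $Z(G) = C_A(x)$, since an element of $A$ that commutes with $x$ commutes with all of $G = A\langle x\rangle$.

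The key device is the map $\phi \colon A \to A$ defined by $\phi(a) = [a,x] = a^{-1}x^{-1}ax$. Since $A$ is abelian, the identity $\phi(ab) = a^{-1}a^x b^{-1} b^x = \phi(a)\phi(b)$ holds, so $\phi$ is a group homomorphism with kernel $C_A(x) = Z(G)$ and image $[A,x]$.

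The crucial remaining step is to identify $[A,x]$ with $G'$. The inclusion $[A,x] \subseteq G'$ is immediate. For the reverse, I would first verify that $[A,x]$ is normal in $G$: it is stabilised by $A$ because $A$ is abelian and $[A,x] \subseteq A$, and it is stabilised by $x$ because $[a,x]^x = [a^x, x] \in [A,x]$. Then in the quotient $G/[A,x]$ the image of $A$ commutes with $x$ and with itself, so it is central; the further quotient is $G/A \cong \mathbb{Z}/p$, which is cyclic. A group with cyclic central quotient is abelian, so $G/[A,x]$ is abelian and $G' \subseteq [A,x]$.

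Putting the pieces together, the first isomorphism theorem yields $A/Z(G) \cong G'$, so $|A| = |Z(G)|\cdot|G'|$ and therefore $|G| = p|A| = p\cdot|Z(G)|\cdot|G'|$. The one step of real substance is the identification $[A,x]=G'$; the rest is routine bookkeeping once the homomorphism $\phi$ is in hand.
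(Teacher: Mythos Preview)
Your proof is correct. Note, however, that the paper does not actually prove this theorem: it is quoted without proof as a preliminary result from Berkovich--Zhmud$'$ (\cite{zumud}, Lemma~4, p.~303), so there is no in-paper argument to compare against. The argument you give---the commutator map $a\mapsto[a,x]$ on the abelian normal subgroup $A$, with kernel $C_A(x)=Z(G)$ and image $[A,x]=G'$---is precisely the standard proof one finds for this classical lemma, and every step is sound. Your parenthetical observation that the identity fails when $G$ is abelian (and hence that a non-abelian hypothesis is implicit) is also correct and worth flagging, since the paper's statement omits it.
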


\section{The main results}

In this section, we prove the main results of the paper. Let $D_8$ be the dihedral group of order $8$. It is easy to verify that $\mid\Cent(D_8)\mid=\mid \frac{D_8}{Z(D_8)}\mid$. The following result gives two necessary conditions for $\mid \Cent(G) \mid =\mid \frac{G}{Z(G)}\mid$, where $G$ is a finite non-abelian group.

\begin{prop}\label{zclass1}
Let $G$ be a finite non-abelian group such that $\mid \Cent(G) \mid =\mid \frac{G}{Z(G)}\mid$. Then

\begin{enumerate}
	\item  $ \frac{G}{Z(G)}$ is an elementary abelian $2$-group (\cite[Theorem 2.1]{amiriF1}).
	\item $Cl(x) \subseteq xZ(G)$ for any $x \in G$.
\end{enumerate} 
\end{prop}

\begin{proof}
 Suppose $\mid \Cent(G) \mid =\mid \frac{G}{Z(G)}\mid=n$. Then $G$ can be written as: 
\[
G=Z(G) \sqcup x_1Z(G) \sqcup x_2Z(G) \sqcup \dots \sqcup x_{n-1}Z(G).
\]
Consequently, we have $\Cent(G)= \lbrace G, C(x_1), C(x_2), \dots, C(x_{n-1})\rbrace$.

Now, 

a) For any $1\leq i \leq n-1$, the elements whose centralizers equals to $C(x_i)$ are precisely the members of $x_iZ(G)$, noting that  $\mid \Cent(G) \mid =n$ and $C(x_i)=C(x_iz)$ for any $z \in Z(G)$. It now follows that  $a^{-1} \in x_iZ(G)$ for any $a \in x_iZ(G)$, because $C(a)=C(a^{-1})$. Therefore we have $x_iZ(G)= {x_i}^{-1}Z(G)$, which implies $o(x_iZ(G))=2$ (in $\frac{G}{Z(G)}$). Hence $ \frac{G}{Z(G)}$ is an elementary abelian $2$-group.\\

b) In view of part (a), observe that $C(x) \unlhd G$ for any $x \in G$. Consequently, $C(x)=gC(x)g^{-1}=C(gxg^{-1})$ for any $x, g \in G$. Now, using the proof of part (a), we have $gxg^{-1} \in xZ(G)$ for any $x, g \in G$. Hence  $Cl(x) \subseteq xZ(G)$ for any $x \in G$.
\end{proof}

Note that for the group $G:=$ Small group $(64, 60)$ in \cite{gap} (see also \cite{ed09}), we have $\frac{G}{Z(G)} \cong C_2 \times C_2 \times C_2$ and $\mid \Cent(G) \mid =6$.

The authors in \cite[Proposition 2.4]{rahul1} proved that if $G$ is a group in which $Z(G)$ has finite index, then the number of $z$-classes is at most the index $[G:Z(G)]$. It is also easy to see that for such groups the number of element centralizers is at most the index $[G:Z(G)]$ (noting that $C(x)=C(xz)$ for any $z \in Z(G)$). In the following  result, for a finite non-abelian group, we give a necessary and sufficient condition for the number of element centralizers/ $z$-classes to be equal to the index of the center.

\begin{prop}\label{z-class2}
Let $G$ be a finite non-abelian group. Then $\mid \Cent(G) \mid =\mid \frac{G}{Z(G)}\mid$ iff $\mid z-class \mid =\mid \frac{G}{Z(G)}\mid$. 
\end{prop}

\begin{proof}
If $\mid \Cent(G) \mid =\mid \frac{G}{Z(G)}\mid$, then by Proposition \ref{zclass1}, $ \frac{G}{Z(G)}$ is  abelian and consequently, $C(x) \unlhd G$ for any $x \in G$. Now, using Remark \ref{rem2}, we have $\mid z-class \mid =\mid \Cent(G) \mid=\mid \frac{G}{Z(G)}\mid$. 

Conversely, suppose $\mid z-class \mid =\mid \frac{G}{Z(G)}\mid$. In view of Remark \ref{rem2}, we have $\mid z-class \mid \leq \mid \Cent(G) \mid \leq \mid \frac{G}{Z(G)}\mid$ (noting that $C(x)=C(xz)$ for any $z \in Z(G)$). Hence the result follows.
\end{proof}

We now give a counterexample to the following Conjecture in \cite{amiriF1}.

 \begin{conj}\label{conj1}
If $G$ is a finite group such that $\mid \Cent(G) \mid =\mid \frac{G}{Z(G)}\mid$, then $G$ is an F-group.
\end{conj}

Consider the dihedral group $D_8$ of order $8$. We have 
\[
\mid \Cent(D_8 \times D_8)\mid= \mid \Cent(D_8)\mid \times \mid \Cent(D_8)\mid =16= \mid \frac{D_8 \times D_8}{Z(D_8 \times D_8)}\mid.
\]
Let $x \in D_8 \setminus Z(D_8)$ and $z \in Z(D_8)$. Then $(x, x), (x, z) \in D_8 \times D_8$ and 
\[
C((x, x))=C(x) \times C(x) \subsetneq C(x) \times C(z) = C((x, z)).
\]
Hence $D_8 \times D_8$ is not an F-group. \\

However for a finite F-group, we have the following result. Recall that $Z(x)$  denotes the center of the proper centralizer $C(x)$.

\begin{prop}\label{bb1}
Let $G$ be a finite F-group. Then 

\begin{enumerate}
	\item  $\mid \Cent(G) \mid =\mid \frac{G}{Z(G)}\mid$ iff $\mid \frac{Z(x)}{Z(G)}\mid=2$ for all $x \in G \setminus Z(G)$.
	\item $\mid z-class \mid =\mid \frac{G}{Z(G)}\mid$ iff $\mid \frac{Z(x)}{Z(G)}\mid=2$ for all $x \in G \setminus Z(G)$. 
\end{enumerate}

\end{prop}

\begin{proof}
a) 
Suppose $\mid \Cent(G) \mid =\mid \frac{G}{Z(G)}\mid=n$. Then $G$ can be written as: 
\[
G=Z(G) \sqcup x_1Z(G) \sqcup x_2Z(G) \sqcup \dots \sqcup x_{n-1}Z(G).
\]
Consequently, we have $\Cent(G)= \lbrace G, C(x_1), C(x_2), \dots, C(x_{n-1})\rbrace$.

Now, suppose $\mid \frac{Z(x_i)}{Z(G)}\mid> 2$ for some $x_i, 1 \leq i \leq n-1$. Then $x_jZ(G) \in \frac{Z(x_i)}{Z(G)}$ for some $j \neq i$, $1 \leq j \leq n-1$. But then $x_j \in Z(x_i)$ and consequently, $C(x_i) \subseteq C(x_j)$, which is a contradiction since $G$ is an F-group. Hence the result follows.

Conversely, suppose $\mid \frac{Z(x)}{Z(G)}\mid=2$ for all $x \in G \setminus Z(G)$. Then $\mid \Cent(G) \mid =\mid \frac{G}{Z(G)}\mid$, noting that in the present scenario, for any $x \in G \setminus Z(G)$, $Z(x)$ will contain exactly two right cosets of $Z(G)$.\\

b) The result follows from Proposition \ref{z-class2}. 
\end{proof}

For any subgroup $H$ of a group $G$, it is easy to see that $C_H(x)=C_G(x)\cap H$, for any $x \in H$. This gives the following result:

\begin{lem}\label{lemm1}
Let $H$ be a subgroup of a group $G$ such that $Z(G) \lneq Z(H)$. Then the number of  centralizers of $G$ produced by elements of $H$ is at least $\mid \Cent(H)\mid +1$.    
\end{lem}

\begin{proof}
Clearly, the number of   centralizers of $G$ produced by elements of $H$ is equal to the number of centralizers of $G$ produced by the elements of $Z(G)$+ the number of centralizers of $G$ produced by the elements of $H \setminus Z(G) \geq 1+\mid \Cent(H)\mid $ (Note that elements of $H$ that have the same centralizers in $H$ may have different centralizers in $G$).
\end{proof}

The following key Lemma improves \cite[Lemma 2.1 and Corollary 2.2]{jaa4}.

\begin{lem}\label{lemm2}
Let $G$ be a non-abelian group and $x \in G \setminus Z(G)$. Then 
\begin{enumerate}
	\item Suppose $x$ has finite order. Then $\mid \Cent(G)\mid \geq  \mid \Cent(C(x))\mid +p+1$, where $p$ is the smallest prime divisor of $o(x)$.
	\item Suppose $x$ has infinite order. Then $\mid \Cent(G)\mid \geq  \mid \Cent(C(x))\mid +3$.
\end{enumerate}  
\end{lem}

\begin{proof}
a) Let $a \in G \setminus C(x)$. Clearly, $ax^i \in G \setminus C(x)$ for any $i$.
Moreover, we have $gcd(m, o(x))=1$ for $1< m< p$ and consequently, $C(x)=C(x^m)$. It now follows that $C(a), C(ax), C(ax^2), \dots, C(ax^{p-1})$ are all distinct centralizers of $G$ (because, if $C(ax^i)=C(ax^j)$ for some $0 \leq i < j \leq p-1$, then $a \in C(x^{j-i})=C(x)$, which is a contradiction). In the present scenario, if $C(ax^i)=C(y)$ for some $y \in C(x)$ and some $i$, then $a \in C(x)$, which is a contradiction.

Now, $\mid\Cent(G)\mid$ = no. of centralizers of $G$ produced by elements of $C(x)$+ no. of centralizers of $G$ produced by elements of $G \setminus C(x)  \geq 1+ \mid \Cent(C(x)) \mid+p$ by Lemma \ref{lemm1}.\\

b) Using arguments similar to (a) we get the result.
\end{proof}

Recall that a finite group $G$ is said to be a CA-group   if centralizer of every noncentral element of $G$ is abelian.

\begin{cor}\label{lemm25}
Let $G$ be a finite non-abelian  group and $p(q)$ be the smallest (largest) prime divisor of its order. Then 
\begin{enumerate}
	\item $2+p \leq \mid\Cent(G)\mid$. Moreover, if $G$ is not a CA-group, then $5+p \leq \mid\Cent(G)\mid$.
	\item If $G$ has trivial center, $2+q \leq \mid\Cent(G)\mid$. 
\end{enumerate}  
\end{cor}

\begin{proof}
The proof follows from Lemma \ref{lemm2}, noting that in case of (b), $G$ has a non-central element say $x$ of order $q$. On the other hand, if $G$ is not a CA-group, then $C(x)$ will be non-abelian for some $x \in G \setminus Z(G)$ and consequently, $4 \leq \mid\Cent(C(x))\mid$. Hence our result follows by noting that $p$ is the smallest prime divisor of $o(x)$. 
\end{proof}

Recall that a finite $p$-group ($p$ a prime) $G$ is said to be a special $p$-group of rank $k$ if $G'=Z(G)$ is elementary abelian of order $p^k$ and $\frac{G}{G'}$ is elementary abelian. Furthermore, a finite group $G$ is extraspecial if $G$ is a special $p$-group and $\mid G' \mid=\mid Z(G) \mid=p$. We now give the following result concerning the upper and lower bounds of $\mid \Cent(G) \mid$. Note that part (a) of the following Theorem improves  \cite[Lemma 2.7]{en09}.

\begin{thm}\label{bs1}
Let $G$ be a finite non-abelian group and $p$ be the smallest prime divisor of its order. Then

\begin{enumerate}
	\item $p+2 \leq \mid \Cent(G) \mid$; with equality if and only if $\frac{G}{Z(G)} \cong C_p \times C_p$; equivalently, if and only if $G$ is isoclinic to an extraspecial group of order $p^3$.
	\item  $\mid \Cent(G) \mid \leq \mid \frac{G}{Z(G)}\mid$; with equality if and only if $\mid z-class \mid =\mid \frac{G}{Z(G)}\mid$; equivalently, $G=H \times A$ where $A$ is an abelian subgroup of odd order and $H$ is a Sylow $2$-subgroup of $G$ such that $\frac{H}{Z(H)}$ is elementary abelian;  equivalently, $G$ is isoclinic to a special $2$-group.
\end{enumerate} 
\end{thm}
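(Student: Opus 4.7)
My plan for part (1) is to establish the lower bound via covering theory. Take $G$ non-abelian; then the proper centralizers each contain $Z(G)$ and together cover $G$, so their images cover the non-cyclic quotient $G/Z(G)$ by proper subgroups. A classical result (Scorza for $p=2$, Tomkinson in general) asserts that a non-cyclic group needs at least $p+1$ proper subgroups to be covered, where $p$ is the smallest relevant prime, with equality precisely when the group admits $C_p \times C_p$ as a quotient. This yields $|\Cent(G)| \geq p+2$ and pins down equality to $G/Z(G) \cong C_p \times C_p$. For the extraspecial characterization, I invoke Theorems \ref{ff1} and \ref{ff2} to replace $G$ by an isoclinic $H$ with $Z(H) \subseteq H'$ (preserving both $|\Cent|$ and $G/Z(G)$). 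Abelianness of $H/Z(H)$ forces $H' \subseteq Z(H)$, so $Z(H) = H'$; since the induced commutator pairing on $C_p \times C_p$ has image of order $p$, we obtain $|Z(H)|=p$ and hence $|H|=p^3$, so $H$ is extraspecial of order $p^3$.

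For part (2), the upper bound follows from the well-defined surjective map $\phi : G/Z(G) \to \Cent(G)$, $xZ(G) \mapsto C(x)$ (well-defined because central factors do not alter centralizers). Equality is precisely the injectivity of $\phi$, which by Proposition \ref{bb1} is equivalent to $|Z(x)/Z(G)|=2$ for every $x \in G \setminus Z(G)$. For the equivalence with being isoclinic to a special $2$-group, the forward direction runs as follows: Theorem \ref{bc1}(b) gives that $G/Z(G)$ is elementary abelian of exponent $2$, and Theorem \ref{ff1} lets me replace $G$ by an isoclinic $H$ with $Z(H) \subseteq H'$. The quotient $H/Z(H)$ is then elementary abelian $2$, so $H$ is nilpotent, and the chain $H_{2'} \subseteq Z(H) \subseteq H' \subseteq H_2$ forces the Hall $2'$-subgroup to be trivial, so $H$ is a $2$-group. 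Then $Z(H) = H'$ (from $H' \subseteq Z(H)$ via abelian quotient and $Z(H) \subseteq H'$), $\Phi(H) = H'H^2 = Z(H)$ (using $H^2 \subseteq Z(H)$), and the identity $[g,h]^2 = [g^2,h] = 1$ (using $g^2 \in Z(H)$ and $H' \subseteq Z(H)$) shows $H'$ has exponent $2$, so $Z(H) = H'$ is elementary abelian and $H$ is special.

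The main obstacle is the converse direction in (2): showing that if $G$ is isoclinic to a special $2$-group $H$, then $|\Cent(G)| = |G/Z(G)|$. By Theorem \ref{ff2} and the isoclinism-invariance of $G/Z(G)$, I may work directly with $H$. The natural tool is the bilinear commutator pairing $\beta : H/Z(H) \times H/Z(H) \to H' = Z(H)$, whose left radical is trivial (any nonzero $\bar x$ in the radical would force $x \in Z(H)$). One must show that distinct nonzero cosets $\bar x, \bar y$ yield linear maps $\beta(-,\bar x), \beta(-,\bar y)$ with distinct kernels, i.e., distinct proper centralizers $C(x), C(y)$. The subtlety is that two linear maps to $Z(H)$ can share a kernel when their images differ by an automorphism; over $\mathbb{F}_2$ these automorphism groups still exist (e.g.\ $GL_2(\mathbb{F}_2)$ when $\dim_{\mathbb{F}_2} Z(H) = 2$), and the argument must exploit the specific structure of $\beta$ imposed by the specialness of $H$ to rule out such coincidences.
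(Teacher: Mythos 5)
Your part (1) and the forward half of part (2) are sound and largely parallel the paper's proof. For the lower bound and the equality case $\frac{G}{Z(G)}\cong C_p\times C_p$ the paper, like you, simply points to the covering argument of Ashrafi's lemma (be aware that the covering theorem you cite only yields a $C_p\times C_p$ \emph{quotient} of $\frac{G}{Z(G)}$; pinning down the isomorphism needs the centralizer-specific refinements, which is also all the paper offers). For the extraspecial characterization the paper passes, as you do, to an isoclinic $H$ with $Z(H)\subseteq H'$ (you should invoke Theorem \ref{ff3}, not \ref{ff2}, for the finite representative, though finiteness also falls out of your computation), but then finishes via Theorem \ref{ff4} (Berkovich--Zhmud: $\mid H\mid=p\,\mid Z(H)\mid\,\mid H'\mid$ using an abelian subgroup of index $p$), whereas you get $\mid H'\mid=p$ directly from the commutator pairing on $C_p\times C_p$ --- a cleaner, equally valid finish. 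Similarly, your Hall-subgroup argument that $H$ is a special $2$-group replaces the paper's decomposition $G=A\times H$ plus the Alperin--Bell exponent lemma; both work.

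The genuine gap is exactly where you flagged it: the converse in (2), that $G$ isoclinic to a special $2$-group forces $\mid \Cent(G)\mid=\mid\frac{G}{Z(G)}\mid$, is missing --- and it cannot be supplied, because it is false. Take $W=C_2^2\wr C_2$ of order $32$: here $Z(W)=W'=\Phi(W)$ is the diagonal copy of $C_2\times C_2$, so $W$ is special, but the base $A\cong C_2^4$ is abelian and equals $C(a)$ for every $a\in A\setminus Z(W)$, so the three nontrivial cosets of $Z(W)$ in $A$ share one centralizer; the four cosets outside $A$ give $C(y)=Z(W)\langle y\rangle$ of order $8$, whence $\mid\Cent(W)\mid=6<8=\mid\frac{W}{Z(W)}\mid$. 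So the kernel coincidence you worried about in the bilinear pairing genuinely occurs, and no structural argument can rule it out. It is worth noting that the paper's own proof of (b) silently proves only the implication ``equality $\Rightarrow$ isoclinic to a special $2$-group'' and never addresses the converse, so your refusal to hand-wave here detected a real defect in the theorem rather than a shortcoming of your attempt. In fact the first ``iff'' of (2) is also compromised: in $D_8\times D_8$ all $16$ subgroups $C(u)\times C(v)$ are distinct, so $\mid\Cent(G)\mid=16=\mid\frac{G}{Z(G)}\mid$, yet for $x=(s,s)$ the centralizer $C_{D_8}(s)\times C_{D_8}(s)$ is abelian of order $16$ and $\mid\frac{Z(x)}{Z(G)}\mid=4$; the flaw in Proposition \ref{bb1} (on which both you and the paper rely) is the inference that $x_j\in Z(x_i)$ forces $C(x_i)=C(x_j)$, when it only gives $C(x_i)\leq C(x_j)$.
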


\begin{proof}
a) Using Corollary \ref{lemm25}, we have $p+2 \leq \mid \Cent(G) \mid$.

Now, suppose $p+2 = \mid \Cent(G) \mid$. Then  by Corollary \ref{lemm25}, $G$ is a CA-group.
Moreover, in view of \cite[Lemma 3.3]{tom}, $\mid \frac{G}{C(x)}\mid=p$ for any $x \in G \setminus Z(G)$. Therefore $G=\langle C(x), C(y) \rangle$ for some $x, y \in G \setminus Z(G)$ and consequently, by \cite[Theorem 4.2]{tom}, $\frac{G}{Z(G)} \cong C_p \times C_p$. Conversely, suppose $\frac{G}{Z(G)} \cong C_p \times C_p$. Then $p+2 = \mid \Cent(G) \mid$, noting that in the present scenario $C(x) \cap C(y)=Z(G)$ for any $x, y \in G \setminus Z(G)$ with $xy \neq yx$ and each proper centralizer of $G$ contains exactly $p$ distinct right cosets of $Z(G)$.

For the last part, suppose $G$ is isoclinic to an extraspecial  group $N$ of order $p^3$. Then $\frac{N}{Z(N)} \cong C_p \times C_p$ and so $ \mid \Cent(N) \mid=p+2$. Now, the result follows using Theorem \ref{ff2}.

Conversely, suppose $p+2 = \mid \Cent(G) \mid$. Then we have $\frac{G}{Z(G)} \cong C_p \times C_p$. In the present scenario, by Theorem \ref{ff1} and Theorem \ref{ff3}, $G$ is isoclinic to a finite group $N$ of order $p^n$ with $Z(N) \subseteq N'$. Moreover, since $\frac{N}{Z(N)} \cong C_p \times C_p$, therefore $Z(N) = N'$. Also note that any proper centralizer of $N$ is abelian normal of index $p$ in $N$. Hence using Theorem \ref{ff4}, we have $\mid N \mid=p.\mid Z(N) \mid .\mid N'\mid$ and consequently, $N$ is an extraspecial
group of order $p^3$.\\

b) Since $C(x)=C(xz)$ for any $z \in Z(G)$, therefore  $\mid \Cent(G) \mid \leq \mid \frac{G}{Z(G)}\mid$; with equality if and only if $\mid z-class \mid =\mid \frac{G}{Z(G)}\mid$  by Proposition \ref{z-class2}. Last part follows from \cite[Proposition 2.5]{rahul1}.

\end{proof}

We now give our first counting formula for number of distinct centralizers:

\begin{prop}\label{f1}
Let $G$ be a finite F-group such that $\mid \frac{G}{Z(G)}\mid=p^k$ ($p$ a prime). If $\mid \frac{Z(x)}{Z(G)}\mid=p^m$ for all $x \in G \setminus Z(G)$, then $\mid \Cent(G) \mid =\frac{p^k-1}{p^m-1}+1$.
\end{prop}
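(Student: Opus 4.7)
The plan is to translate the statement into a counting problem about the strict $Z(G)$-partition $\mathcal{B}$, then count cosets mod $Z(G)$.

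First I would use the F-group hypothesis together with Remark \ref{rem1} (i.e., the Brough characterization) to conclude that $\mathcal{B} = \{Z(x) \mid x \in G \setminus Z(G)\}$ is a strict $Z(G)$-partition of $G$. This means every $Z(x)$ contains $Z(G)$ and every element of $G \setminus Z(G)$ lies in exactly one $Z(x)$. Passing to the quotient, the subgroups $Z(x)/Z(G)$ of $G/Z(G)$ have pairwise trivial intersection and together cover $G/Z(G)$.

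Next I would establish the bijection between proper centralizers and elements of $\mathcal{B}$, i.e., $|\Cent(G)| - 1 = |\mathcal{B}|$. One direction is immediate: $C(x) = C(y)$ forces $Z(x) = Z(C(x)) = Z(C(y)) = Z(y)$. For the converse, suppose $Z(x) = Z(y)$. For any $g \in C(x)$, since $y \in Z(y) = Z(x) = Z(C(x))$, the element $y$ commutes with every element of $C(x)$, in particular with $g$; hence $g \in C(y)$. By symmetry $C(x) = C(y)$. This step is where the F-group structure really enters, via the fact that $Z(x)$ determines $C(x)$ on the nose.

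Finally I would count. Each $Z(x)/Z(G)$ is a subgroup of $G/Z(G)$ of order $p^m$, so contributes exactly $p^m - 1$ non-identity elements. Since these subgroups partition the $p^k - 1$ non-identity elements of $G/Z(G)$, there must be exactly $(p^k-1)/(p^m-1)$ of them. Therefore
\[
|\Cent(G)| = |\mathcal{B}| + 1 = \frac{p^k - 1}{p^m - 1} + 1.
\]

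I do not expect a real obstacle; the only slightly delicate point is the implication $Z(x) = Z(y) \Rightarrow C(x) = C(y)$, which is what turns the partition count for $\mathcal{B}$ into a count for $\Cent(G)$. Everything else is bookkeeping using the quotient $G/Z(G)$.
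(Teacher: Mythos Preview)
Your proposal is correct and follows essentially the same route as the paper: use Remark~\ref{rem1} to obtain the strict $Z(G)$-partition $\mathcal{B}$, then count non-trivial cosets of $Z(G)$ in $G/Z(G)$, each block contributing $p^m-1$ of them. The paper's proof is terser---it simply notes $Z(x)\cap Z(y)=Z(G)$ whenever $C(x)\neq C(y)$ and then counts cosets---whereas you spell out the bijection between proper centralizers and elements of $\mathcal{B}$. One small remark: your implication $Z(x)=Z(y)\Rightarrow C(x)=C(y)$ is valid in any group (your own argument uses only that $y\in Z(C(x))$ forces $C(x)\subseteq C(y)$), so the F-group hypothesis is not ``really entering'' there; it enters solely to guarantee that $\mathcal{B}$ is a partition.
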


\begin{proof}
Since $G$ is a finite F-group, therefore by Remark \ref{rem1}, we have  $Z(x) \cap Z(y)=Z(G)$ for any $x, y \in G \setminus Z(G)$ with $C(x) \neq C(y)$. Hence the result follows by noting that for any $x \in G \setminus Z(G)$, $Z(x)$ contains exactly $p^m$ distinct right cosets of $Z(G)$.
\end{proof}

The following result generalizes \cite[Theorem 3.5]{amiriF}.

\begin{thm}\label{b1}
Let $G$ be a finite group and $p$ a prime. Then $\mid \frac{Z(x)}{Z(G)}\mid=p$ for all $x \in G \setminus Z(G)$ if and only if
\begin{enumerate}
	\item $G$ is an F-group.
	\item $\frac{G}{Z(G)}$ is of exponent $p$.
	\item  $\mid \Cent(G) \mid =\frac{p^k-1}{p-1}+1$, where $\mid \frac{G}{Z(G)}\mid=p^k$.
\end{enumerate} 
\end{thm}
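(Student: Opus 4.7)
The plan is to prove each direction separately using the structural description of F-groups from Remark \ref{rem1}, together with Proposition \ref{f1} for the counting formula.

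For the forward direction, I assume $\mid Z(x)/Z(G)\mid = p$ for every $x \in G\setminus Z(G)$. To establish (a), suppose $C(x)\leq C(y)$ with $x,y\notin Z(G)$; then elements of $Z(y) = Z(C(y))$ centralize everything in $C(y)\supseteq C(x)$, so $Z(y)\subseteq Z(x)$. Since both quotients have size $p$, order considerations force $Z(x)=Z(y)$. Now $x\in Z(x)=Z(y)$ means $x$ commutes with all of $C(y)$, giving $C(y)\subseteq C(x)$ and hence equality, so $G$ is an F-group. For (b), note that any $x\in G\setminus Z(G)$ lies in $Z(x)$, and the coset $xZ(G)$ has order dividing $\mid Z(x)/Z(G)\mid = p$ in $G/Z(G)$; since the central case is trivial, $G/Z(G)$ has exponent $p$. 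Because $G$ is finite, $G/Z(G)$ is then a finite $p$-group of some order $p^k$. Statement (c) now follows immediately by applying Proposition \ref{f1} with $m=1$.

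For the converse, assume (a), (b), (c). By Remark \ref{rem1} and the F-group hypothesis, $\B$ is a strict $Z(G)$-partition of $G$; list its components as $Z(x_1),\dots,Z(x_n)$ where $n=\mid\Cent(G)\mid-1=\frac{p^k-1}{p-1}$ by (c). Because $G/Z(G)$ has exponent $p$, each $Z(x_i)/Z(G)$ is an elementary abelian $p$-group, so $\mid Z(x_i)/Z(G)\mid = p^{m_i}$ for some $m_i\geq 1$. Counting elements of $G\setminus Z(G)$ partitioned by the $Z(x_i)\setminus Z(G)$ and dividing by $\mid Z(G)\mid$ yields
\[
p^k-1 \;=\; \sum_{i=1}^{n}\bigl(p^{m_i}-1\bigr).
\]
Since $p^{m_i}-1\geq p-1$ with equality iff $m_i=1$, and $n(p-1)=p^k-1$, the sum forces $m_i=1$ for all $i$, i.e. $\mid Z(x)/Z(G)\mid = p$ for every $x\in G\setminus Z(G)$.

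The step I expect to be the main subtlety is the F-group implication in the forward direction: one must realize that $C(x)\leq C(y)$ gives the reverse inclusion $Z(y)\subseteq Z(x)$ on centers, and then exploit the hypothesis on the index $\mid Z(x)/Z(G)\mid$ together with the tautology $x\in Z(x)$ to pull the centralizer containment back in the opposite direction. The reverse direction is essentially a bookkeeping computation, but it hinges on correctly identifying the number $n$ of components of the strict $Z(G)$-partition from (c).
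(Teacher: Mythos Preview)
Your proof is correct. The converse direction and parts (b), (c) of the forward direction match the paper's argument essentially line for line (the paper proves (c) directly rather than citing Proposition~\ref{f1}, but the content is identical).

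The one genuine difference is in the forward direction for (a). The paper observes that since each $Z(x)/Z(G)$ has order $p$ it is cyclic and generated by $xZ(G)$, so the family $\{Z(x)/Z(G)\}$ is a partition of $G/Z(G)$; it then quotes the equivalence in Remark~\ref{rem1} (F-group $\Leftrightarrow$ $\B$ is a strict $Z(G)$-partition) to conclude. You instead argue directly at the level of centralizers: from $C(x)\leq C(y)$ you extract $Z(y)\subseteq Z(x)$, use the equal-size hypothesis to get $Z(x)=Z(y)$, and then push back to $C(y)\subseteq C(x)$ via $x\in Z(y)$. Your route is more self-contained (it avoids invoking the partition characterization as a black box), while the paper's route makes the structural reason---that the $Z(x)$'s form an honest partition---more visible and sets up (b) and (c) immediately. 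Both are clean.

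One small remark on the converse: your appeal to hypothesis (b) to say each $Z(x_i)/Z(G)$ is a $p$-group is harmless but unnecessary, since $|G/Z(G)|=p^k$ already forces this by Lagrange. The paper's converse in fact uses only (a) and (c), so (b) is redundant in the equivalence.
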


\begin{proof}

Suppose $\mid \frac{Z(x)}{Z(G)}\mid=p$ for all $x \in G \setminus Z(G)$.\\
a) Clearly, $\Pi=\lbrace \frac{Z(x)}{Z(G)} \mid x \in G \setminus Z(G)\rbrace$ is a partition of $\frac{G}{Z(G)}$. Hence  by Remark \ref{rem1}, $G$ is an F-group.\\

b) It is clear from the proof of (a) that every element of $\frac{G}{Z(G)}$ is of order $\leq p$.\\

c) Clearly, for any $x \in G \setminus Z(G)$, $Z(x)$ contains exactly $p$ distinct right cosets of $Z(G)$. Hence the result follows.

Conversely, suppose $G$ is an F-group,  $\mid \frac{G}{Z(G)}\mid=p^k$ and  $\mid \Cent(G) \mid =\frac{p^k-1}{p-1}+1 (=l)$. Let $C(x_i),1 \leq i \leq l-1 $ be the proper centralizers of $G$. Since $G$ is an F-group, therefore by Remark \ref{rem1}, $\Pi= \lbrace \frac{Z(x_i)}{Z(G)} \mid 1\leq i \leq  l-1\rbrace$ is a partition of $\frac{G}{Z(G)}$. In the present scenario, we have  $p^k=\mid \frac{Z(x_1)}{Z(G)} \mid+ \mid \frac{Z(x_2)}{Z(G)} \mid+ \dots+\mid \frac{Z(x_{l-1})}{Z(G)} \mid -\frac{p^k-1}{p-1}+1$. Consequently, $\mid \frac{Z(x)}{Z(G)} \mid=p$ for all $x \in G \setminus Z(G)$.
\end{proof}

As an immediate corollary, we have the following result.

\begin{cor}\label{cor1}
Let $G$ be a finite F-group such that $\mid \frac{G}{Z(G)}\mid=p^4$ ($p$ a prime). If $G$ is not a CA-group, then $\mid \Cent(G) \mid =p^3+p^2+p+2$ (\cite[Theorem 3.5]{amiriF}). Moreover, $\frac{G}{Z(G)}$ is of exponent $p$. 
\end{cor}

\begin{proof}
Since $G$ is not a CA-group therefore $G$ must have a proper non-abelian centralizer. If $C(x)$ is non-abelian for some $x \in G \setminus Z(G)$, then we must have $\mid \frac{G}{C(x)}\mid=p$ and hence $\mid \frac{Z(x)}{Z(G)}\mid=p$.

On the otherhand, if $C(y)$ is abelian for some $y \in G \setminus Z(G)$, then $C(x) \cap C(y)=Z(G)$, for if $z \in (C(x) \cap C(y)) \setminus Z(G)$, then $C(z)=C(x)=C(y)$, which is a contradiction. Therefore we have $\mid \frac{G}{C(y)}\mid=p^3$, noting that in the present scenario,   $C(x) \unlhd G$ and $G=C(x)C(y)$. Hence $\mid \frac{Z(y)}{Z(G)}\mid=p$. Now, the result follows  from Theorem \ref{b1}.
\end{proof}

Following Ito \cite{ito}, a finite group $G$ is said to be of conjugate type $(n, 1)$ if every proper centralizer of $G$ is of index $n$. He proved that a group of  conjugate type $(n, 1)$ is nilpotent and $n=p^a$ for some prime $p$. Moreover, he also proved that a group of  conjugate type $(p^a, 1)$ is a direct product of a $p$-group of the same type and an abelian group. The author in \cite{ish1} classified finite $p$-groups of conjugate type $(p, 1)$ and $(p^2, 1)$ upto isoclinism. In the following result, we calculate the number of element centralizers and $z$-classes of a finite group of conjugate type $(p, 1)$. Given a group $G$, $nacent(G)$ denotes the set of non-abelian centralizers of $G$. For more information about $nacent(G)$ see \cite{nab, con}.

\begin{prop}\label{prop22}
Let $G$ be a finite group such that $ \mid \frac{G}{C(x)} \mid=p$ ($p$ a prime) for all $x \in G \setminus Z(G)$. Then 
\begin{enumerate}
	\item  $\frac{G}{Z(G)}$ is elementary abelian $p$-group of order $p^k$ for some $k$.
	\item  $\mid \Cent(G) \mid =\mid $z$-class\mid=\frac{p^k-1}{p-1}+1$.
	\item  $\mid nacent(G)\mid= 1$ iff $\frac{G}{Z(G)} \cong C_p \times C_p$.
	\item  $\mid nacent(G)\mid= \mid \Cent(G) \mid$ iff $\mid \frac{G}{Z(G)} \mid > p^2$.
\end{enumerate} 
\end{prop}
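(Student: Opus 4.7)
My plan is to treat the four parts in sequence, using Ito's structural result (cited in the paragraph preceding the proposition) and Theorem~\ref{b1} as the main tools. For part (a), I invoke Ito's decomposition $G = A \times P$ with $A$ abelian and $P$ a $p$-group of conjugate type $(p,1)$, so $G/Z(G) \cong P/Z(P)$ has order $p^k$ for some $k$. In the $p$-group $P$ each proper centralizer $C_P(x)$ has index $p$, hence is a maximal (and so normal) subgroup, giving $P' \subseteq \bigcap_{x \notin Z(P)} C_P(x) = Z(P)$. Thus $G$ has nilpotency class at most $2$ and $G/Z(G)$ is abelian. Using class $2$, $[g^p,x] = [g,x]^p$ for any $g \in G$, $x \notin Z(G)$; the commutator map $[\cdot,x]: G \to G'$ has kernel $C(x)$ of index $p$, so its image has order $p$ and $[g,x]^p = 1$. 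Therefore $g^p \in \bigcap_{x \notin Z(G)} C(x) = Z(G)$, and $G/Z(G)$ is elementary abelian of order $p^k$.

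For part (b) the plan is to verify the hypothesis $|Z(x)/Z(G)| = p$ for every $x \notin Z(G)$ and then apply Theorem~\ref{b1}. The containment $\langle \bar{x} \rangle \subseteq Z(x)/Z(G)$ is automatic, so only the reverse needs work. The crux is the claim $|G'| = p$, which I would prove via the alternating non-degenerate bilinear form $\beta: G/Z(G) \times G/Z(G) \to G'$ given by $\beta(\bar{g}, \bar{h}) = [g,h]$. The hypothesis says each partial map $\phi_{\bar{x}} = \beta(\cdot, \bar{x})$ has one-dimensional image $L_{\bar{x}} \subseteq G'$ whenever $\bar{x} \neq 0$. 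If $\beta(\bar{y}, \bar{x}) \neq 0$ the two lines $L_{\bar{x}}, L_{\bar{y}}$ share this nonzero element and coincide; if $\beta(\bar{y}, \bar{x}) = 0$, I choose $\bar{z}$ outside both hyperplanes $\ker \phi_{\bar{x}}$ and $\ker \phi_{\bar{y}}$ (possible since the union of two proper subspaces of $G/Z(G)$ is proper whenever $k \geq 2$) and apply the previous case twice. Hence all $L_{\bar{x}}$ coincide in a single line $L$, the image of $\beta$ lies in $L$, and $G' = L$ has order $p$. Now $y \in Z(x) \setminus Z(G)$ forces $C(y) = C(x)$, so $\phi_{\bar{y}}$ and $\phi_{\bar{x}}$ are linear functionals on $G/Z(G)$ with values in $\mathbb{F}_p$ sharing the same kernel; hence $\phi_{\bar{y}} = \lambda \phi_{\bar{x}}$ for some $\lambda \in \mathbb{F}_p^{\times}$, and non-degeneracy of $\beta$ gives $\bar{y} = \lambda \bar{x}$, so $Z(x)/Z(G) = \langle \bar{x} \rangle$ has order $p$. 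Theorem~\ref{b1} then yields $|\Cent(G)| = (p^k - 1)/(p - 1) + 1$. For the $z$-class count, each proper $C(x)$ is a maximal (hence normal) subgroup of $P$, therefore normal in $G = A \times P$; Remark~\ref{rem2} then gives $|z\text{-class}| = |\Cent(G)|$.

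Parts (c) and (d) follow quickly from (b): since $|C(x)/Z(x)| = p^{k-1}/p = p^{k-2}$, every proper centralizer is abelian when $k = 2$ and non-abelian when $k \geq 3$. Because $G$ itself is non-abelian and lies in $\Cent(G)$, $|nacent(G)| = 1$ exactly when every proper $C(x)$ is abelian, i.e., $k = 2$, i.e., $G/Z(G) \cong C_p \times C_p$; while $|nacent(G)| = |\Cent(G)|$ exactly when every proper $C(x)$ is also non-abelian, i.e., $k \geq 3$, i.e., $|G/Z(G)| > p^2$. The main obstacle of the plan is the bilinear-form argument forcing $|G'| = p$; the delicate case is a pair $\bar{x}, \bar{y}$ with $\beta(\bar{y}, \bar{x}) = 0$, where one must insert a bridge element $\bar{z}$ lying outside the two hyperplanes $\ker\phi_{\bar{x}}$ and $\ker\phi_{\bar{y}}$.
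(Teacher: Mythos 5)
Your proof is correct, and while its outer skeleton (Ito's decomposition $G=A\times P$, then Theorem~\ref{b1} plus Remark~\ref{rem2} for part (b), then the dichotomy $k=2$ versus $k\geq 3$ for parts (c) and (d)) matches the paper's, the crucial middle step is handled by a genuinely different route: the paper obtains $\mid Z(x)/Z(G)\mid=p$ for all non-central $x$ by citing Mann's Proposition~1 \cite{mann}, and obtains part (a) from the F-group/partition machinery of Remark~\ref{rem1}, whereas you prove everything from scratch. Your chain works: index-$p$ centralizers are maximal, hence normal, in the relevant nilpotent group, forcing class $2$; the commutator homomorphism $g\mapsto [g,x]$ with kernel $C(x)$ of index $p$ gives exponent $p$ for $G/Z(G)$; the alternating form $\beta$ then shows all image lines $L_{\bar{x}}$ coincide (your bridge-element argument in the case $\beta(\bar{y},\bar{x})=0$ is sound, since no vector space is the union of two proper subspaces), whence $\mid G'\mid=p$, and proportionality of two surjective functionals with equal kernels, combined with non-degeneracy, yields $Z(x)=\langle x, Z(G)\rangle$. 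This buys self-containedness — you in effect reprove the needed special case of Mann's proposition and extract the extra structural fact $\mid G'\mid=p$, consistent with Ishikawa's classification of type-$(p,1)$ groups up to isoclinism — at the cost of length, where the paper settles the point with two citations. One cosmetic slip to fix: a proper centralizer $C_G(x)$ is not a subgroup of $P$; what you mean is $C_G(x)=A\times C_P(u)$ with $C_P(u)$ maximal, hence normal, in $P$ — or, more economically, once you know $G/Z(G)$ is abelian, every subgroup containing $Z(G)$, in particular every centralizer, is automatically normal in $G$, which is all that Remark~\ref{rem2} requires for the $z$-class count.
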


\begin{proof}
a) In view of Ito \cite{ito}, $G = A \times P$, where $A$ is an abelian group and $P$ is a $p$-group of conjugate type $(p, 1)$. Therefore using Remark \ref{rem1},  $\frac{G}{Z(G)}$ is elementary abelian $p$-group of order $p^k$ for some $k$.\\

b) The result follows from Theorem \ref{b1} and Remark \ref{rem2}, noting that in the present scenario, by \cite[Proposition 1]{mann}, we have $\mid \frac{Z(x)}{Z(G)}\mid=p$ for all $x \in G \setminus Z(G)$.\\

c) If $\frac{G}{Z(G)} \cong C_p \times C_p$, then $G$ is a CA-group and hence $\mid nacent(G)\mid= 1$. Conversely, suppose $\mid nacent(G)\mid= 1$. Then in view of \cite[Proposition 1]{mann}, we have $\frac{G}{Z(G)} \cong C_p \times C_p$.\\

d) In view of \cite[Proposition 1]{mann}, we have $\mid \frac{Z(x)}{Z(G)}\mid=p$ for all $x \in G \setminus Z(G)$. Hence the result follows.
\end{proof}

We now compute the number of distinct centralizers and $z$-classes of an extraspecial $p$-group ($p$ a prime). It is well known that every extraspecial $p$-group has order $p^{2a+1}$ for some positive integer $a$ and for every positive integer $a$, there exists, upto isomorphism, exactly two extraspecial groups of order $p^{2a+1}$.

\begin{prop}\label{prop227}
Let $G$ be an extraspecial $p$-group of order $p^{2a+1}$ for some prime $p$. Then $\mid \Cent(G) \mid =\mid z-class \mid=\frac{p^{2a}-1}{p-1}+1$.
\end{prop}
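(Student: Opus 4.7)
The plan is to reduce the claim to the previous proposition by showing that an extraspecial $p$-group $G$ of order $p^{2a+1}$ is of conjugate type $(p,1)$, i.e., $\mid G/C(x) \mid = p$ for every $x \in G \setminus Z(G)$. Once this is established, a direct application of Proposition \ref{prop22}(b) with $k = 2a$ yields $\mid \Cent(G) \mid = \mid z\text{-class} \mid = \frac{p^{2a}-1}{p-1}+1$.

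To verify the conjugate-type condition, I would exploit the standard symplectic structure on an extraspecial group. Since $G$ is extraspecial, $Z(G) = G'$ has order $p$, and $V := G/Z(G)$ is elementary abelian of order $p^{2a}$, hence a vector space over $\mathbb{F}_p$ of dimension $2a$. The commutator bracket induces a well-defined, non-degenerate, alternating bilinear form $\omega : V \times V \to Z(G) \cong \mathbb{F}_p$ given by $\omega(\bar{x},\bar{y}) = [x,y]$. Non-degeneracy comes from the fact that any element of $G$ whose image in $V$ lies in the radical of $\omega$ commutes with all of $G$, hence belongs to $Z(G)$.

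Now for any non-central $x \in G$, the coset $\bar{x}$ is a nonzero vector in $V$, and $C(x)/Z(G)$ equals the orthogonal complement $\bar{x}^{\perp}$ with respect to $\omega$. Because $\omega$ is non-degenerate, $\bar{x}^{\perp}$ is a hyperplane of $V$, so $\mid C(x)/Z(G) \mid = p^{2a-1}$ and therefore $\mid G : C(x) \mid = p$. This holds for every $x \in G \setminus Z(G)$, so $G$ is of conjugate type $(p,1)$.

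Applying Proposition \ref{prop22}(b) with $p^k = \mid G/Z(G) \mid = p^{2a}$ then gives the desired equality $\mid \Cent(G) \mid = \mid z\text{-class} \mid = \frac{p^{2a}-1}{p-1}+1$. The only nontrivial step is the symplectic identification above, but this is essentially standard folklore for extraspecial groups; alternatively, one could prove $\mid Z(x)/Z(G) \mid = p$ directly (noting that the radical of $\omega$ restricted to the hyperplane $\bar{x}^{\perp}$ is exactly the line $\langle \bar{x} \rangle$) and invoke Theorem \ref{b1} together with the observation that every centralizer has index $p$ in $G$ and is therefore normal, so that $\mid z\text{-class}\mid = \mid \Cent(G)\mid$ by Remark \ref{rem2}.
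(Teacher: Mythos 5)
Your proposal is correct and follows essentially the same route as the paper: both establish that $G$ has conjugate type $(p,1)$ and then invoke Proposition \ref{prop22}(b) with $\mid Z(G)\mid=p$ and $k=2a$. The only difference is that the paper simply cites \cite[Pp. 5]{lewis} for $\mid C(x)\mid=p^{2a}$, whereas you prove this fact from scratch via the non-degenerate alternating form on $G/Z(G)$ --- a correct and standard argument that merely makes the cited step self-contained.
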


\begin{proof}
Using \cite[Pp. 5]{lewis}, we have $\mid C(x) \mid= p^{2a}$ for all $x \in G \setminus Z(G)$. Now the result follows from Proposition \ref{prop22}, noting that here we have $\mid Z(G) \mid=p$.
\end{proof}

Our next result concerns about finite groups of conjugate type $(p^2, 1)$.

\begin{prop}\label{prop223}
Let $G$ be a finite group such that $ \mid \frac{G}{C(x)} \mid=p^2$ ($p$ a prime) for all $x \in G \setminus Z(G)$. Then one of the following assertions hold:
\begin{enumerate}
	\item  $\frac{G}{Z(G)}$ is elementary abelian $p$-group.
	\item  $\frac{G}{Z(G)}$ is non-abelian of order $p^3$ ($p$ odd) and of exponent $p$; and $\mid \Cent(G) \mid =p^2+p+2$.
\end{enumerate} 
\end{prop}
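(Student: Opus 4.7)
The plan is to first reduce to the $p$-group case via Ito's structure theorem: a group of conjugate type $(p^a,1)$ is a direct product $G = A \times P$ of an abelian group $A$ and a $p$-group $P$ of the same conjugate type. Since $Z(G) = A \times Z(P)$ and $C_G((a,x)) = A \times C_P(x)$, we have $G/Z(G) \cong P/Z(P)$ and $|\Cent(G)| = |\Cent(P)|$, so it suffices to prove the statement assuming $G$ is a $p$-group. I would then split into two cases according to whether $G/Z(G)$ is abelian.

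In the abelian case, $G' \le Z(G)$, so $G$ has nilpotency class at most $2$. For each $x \in G \setminus Z(G)$ the map $g \mapsto [g,x]$ is a homomorphism from $G$ onto $[G,x]$ with kernel $C(x)$, and the hypothesis $|G/C(x)| = p^2$ gives $|[G,x]| = p^2$. Using the class-$2$ identity $[g, x^p] = [g,x]^p$, we get $[G, x^p] = [G,x]^p$, which has order at most $p$ (it is trivial or cyclic of order $p$ according as $[G,x]$ is elementary abelian or cyclic of order $p^2$). Since $|[G,x^p]| < p^2$, the conjugate-type hypothesis forces $x^p \in Z(G)$, so $G/Z(G)$ has exponent $p$ and is therefore elementary abelian, giving conclusion (a).

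In the non-abelian case I would invoke Ishikawa's classification of $p$-groups of conjugate type $(p^2,1)$ up to isoclinism (reference \cite{ish1}), from which $G/Z(G)$ is non-abelian of order $p^3$ and exponent $p$; this forces $p$ to be odd since no non-abelian group of order $8$ has exponent $2$. To compute $|\Cent(G)|$, observe that for $x \in G \setminus Z(G)$ we have $|C(x)/Z(G)| = p^3/p^2 = p$, while $\langle x, Z(G) \rangle \le C(x)$ with $|\langle x, Z(G) \rangle / Z(G)| = p$, so $C(x) = \langle x, Z(G) \rangle$. Hence $C(x) = C(y)$ iff $\langle xZ(G) \rangle = \langle yZ(G) \rangle$ in $G/Z(G)$. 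Since $G/Z(G)$ has exponent $p$, its $p^3 - 1$ non-identity elements partition into $(p^3-1)/(p-1) = p^2 + p + 1$ cyclic subgroups of order $p$, giving $p^2+p+1$ non-central centralizers; including $G$ itself yields $|\Cent(G)| = p^2+p+2$.

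The main obstacle I expect is the non-abelian case: the crucial input that $|G/Z(G)| = p^3$ (rather than some higher power of $p$) rests on Ishikawa's classification, and reproving this from scratch would require a careful analysis of the commutator structure of class-$3$ groups satisfying the conjugate-type hypothesis. The abelian case is clean once the class-$2$ commutator identity is in hand, and the centralizer count in the non-abelian case is then a straightforward combinatorial count of cyclic subgroups of order $p$ in a $p$-group of order $p^3$ and exponent $p$.
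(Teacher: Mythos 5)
Your proposal is sound in outline and, in the non-abelian case, travels the same road as the paper: both hinge on Ishikawa's classification \cite{ish1} to pin down $\mid \frac{G}{Z(G)}\mid = p^3$, and both finish with the same count $\frac{p^3-1}{p-1}+1 = p^2+p+2$ (the paper via the F-group partition of Remark \ref{rem1} and the counting formula of Proposition \ref{f1}; you via $C(x)=\langle x, Z(G)\rangle$ --- the same computation in different clothing; note also that exponent $p$ need not be extracted from Ishikawa at all, since once $\mid \frac{G}{Z(G)}\mid=p^3$ is known, $xZ(G)$ is a nontrivial element of the group $\frac{C(x)}{Z(G)}$ of order $p$, so $x^p \in Z(G)$ automatically, which is what your centralizer count actually uses). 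Your abelian case is a genuinely different and more elementary argument: the paper observes that $G$ is an F-group and applies the partition $\lbrace \frac{Z(x)}{Z(G)}\rbrace$ of $\frac{G}{Z(G)}$ together with Miller's theorem that an abelian group with a non-trivial partition is elementary abelian (Remark \ref{rem1}), whereas your class-$2$ identity $[g,x^p]=[g,x]^p$ yields $x^p \in Z(G)$ directly and self-containedly. Your explicit reduction $G=A\times P$ via Ito \cite{ito} is also a point in your favor: Ishikawa's theorem concerns $p$-groups, so some such reduction (the paper effects it implicitly through isoclinism) is needed before citing it.

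The one genuine soft spot is your treatment of $p=2$. You deduce that $p$ is odd \emph{from} the classification (``no non-abelian group of order $8$ has exponent $2$''), but this is circular unless Ishikawa's Theorem 4.2 is stated for all primes with ``$p$ odd'' as part of its \emph{conclusion}; if, as the paper's arrangement of the proof suggests, that theorem presupposes $p$ odd, then your argument has not excluded a class-$3$ $2$-group of conjugate type $(4,1)$ at all. The paper disposes of $p=2$ \emph{before} invoking \cite{ish1}: by \cite[Proposition 2]{mann}, $\mid \frac{Z(x)}{Z(G)}\mid = p$ for every $x \in G\setminus Z(G)$ when $\frac{G}{Z(G)}$ is non-abelian; if $p=2$ this gives $\mid \Cent(G)\mid = \mid \frac{G}{Z(G)}\mid$ by Proposition \ref{bb1}, whence $\frac{G}{Z(G)}$ is an elementary abelian $2$-group by Theorem \ref{bc1}(b), contradicting non-abelianness. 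You should either verify that the precise statement of Ishikawa's Theorem 4.2 covers $p=2$, or insert this short independent argument to rule it out.
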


\begin{proof}
If $\frac{G}{Z(G)}$ is abelian, then by Remark \ref{rem1}, $\frac{G}{Z(G)}$ is elementary abelian $p$-group. 
Next, suppose $\frac{G}{Z(G)}$ is non-abelian. In view of \cite[Proposition 1 and Proposition 2]{mann}, $\mid \frac{Z(x)}{Z(G)} \mid=p$ for all $x \in G \setminus Z(G)$. Hence using Theorem \ref{b1}, $\frac{G}{Z(G)}$ is of exponent $p$.  Therefore $p$ is odd and consequently,  using \cite[Theorem 4.2]{ish1}, $G$ is isoclinic to a group of order $p^5$ with center of order $p^2$. Hence $\frac{G}{Z(G)}$ is non-abelian of order $p^3$.  Moreover, using Theorem \ref{b1} again, we have $\mid \Cent(G) \mid =p^2+p+2$.
\end{proof}

It may be mentioned here that for the  group G:= Small group (64, 73) in \cite{gap} (see also \cite{ed09}), we have $\frac{G}{Z(G)} \cong C_2 \times C_2 \times C_2$ and $ \mid \frac{G}{C(x)} \mid=4$ for all $x \in G \setminus Z(G)$.\\

The following Proposition generalizes \cite[Theorem 3.8]{amiriF}.

\begin{prop}\label{pp1}
Let $G$ be a finite F-group such that $\mid \frac{G}{Z(G)}\mid=p^k$ ($p$ a prime). If $\mid \frac{Z(x)}{Z(G)} \mid \leq p^2$ for all $x \in G \setminus Z(G)$, then $\mid \Cent(G) \mid=p^{k-1}+p^{k-2}+ \dots+p+2-vp$, where   $v$ is the number of centralizers for which $\mid \frac{Z(x)}{Z(G)}\mid = p^2$.
\end{prop}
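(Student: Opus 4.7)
The plan is to exploit the F-group structure via Remark \ref{rem1} to obtain a partition of $G/Z(G)$ by the subgroups $Z(x)/Z(G)$, and then count the cosets of $Z(G)$ in two ways.

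Since $G$ is a finite F-group, Remark \ref{rem1} tells us that $\mathcal{B} = \{Z(x) \mid x \in G\setminus Z(G)\}$ is a strict $Z(G)$-partition of $G$. Passing to the quotient, this means that $\Pi = \{Z(x)/Z(G) \mid x \in G \setminus Z(G)\}$ is a partition of $G/Z(G)$ in the usual sense: every non-identity element of $G/Z(G)$ lies in exactly one member of $\Pi$, and the members of $\Pi$ pairwise intersect in the identity. In particular, the number of distinct subgroups appearing in $\Pi$ equals the number of proper centralizers, which is $|\Cent(G)| - 1$.

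Next, I would partition the proper centralizers according to the size of $Z(x)/Z(G)$. Let $u$ denote the number of proper centralizers $C(x)$ with $|Z(x)/Z(G)| = p$, and let $v$ denote the number with $|Z(x)/Z(G)| = p^2$, so that by hypothesis $u + v = |\Cent(G)| - 1$. Each subgroup of the first type contributes exactly $p-1$ non-identity cosets of $Z(G)$, and each of the second type contributes exactly $p^2 - 1$. Since these cosets together account for every non-identity element of $G/Z(G)$, I get the counting identity
\begin{equation*}
u(p-1) + v(p^2 - 1) = p^k - 1.
\end{equation*}

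Dividing through by $p - 1$ and using $p^2 - 1 = (p-1)(p+1)$ yields
\begin{equation*}
u + v(p+1) = p^{k-1} + p^{k-2} + \cdots + p + 1,
\end{equation*}
so that $u + v = p^{k-1} + p^{k-2} + \cdots + p + 1 - vp$. Adding $1$ to both sides to account for $C(e) = G$ gives the claimed formula $|\Cent(G)| = p^{k-1} + p^{k-2} + \cdots + p + 2 - vp$. There is no real obstacle here; the only thing to be careful about is making sure one remembers to add $1$ for the centralizer $G$ itself, since $\Pi$ is indexed by the \emph{proper} centralizers while $\Cent(G)$ includes $G$.
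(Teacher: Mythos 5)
Your proof is correct and follows essentially the same route as the paper: both use the F-group property (via Remark \ref{rem1}) to partition the non-identity elements of $G/Z(G)$ among the subgroups $Z(x)/Z(G)$, count cosets according to whether $\mid Z(x)/Z(G)\mid$ is $p$ or $p^2$, and solve the resulting identity $u(p-1)+v(p^2-1)=p^k-1$ for $\mid \Cent(G)\mid = u+v+1$. The paper states this as $\mid \Cent(G)\mid = \frac{(p^k-1)-v(p^2-1)}{p-1}+v+1$, which is your computation in one line.
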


\begin{proof}
Since $G$ is a finite F-group, therefore by Remark \ref{rem1}, $Z(x) \cap Z(y)=Z(G)$ for all $x, y \in G \setminus Z(G)$ with $C(x) \neq C(y)$. Let $v$ be the number of centralizers for which $\mid \frac{Z(x)}{Z(G)} \mid = p^2$. Then the centers of these $v$ number of centralizers will contain exactly $v(p^2-1)$ distinct right cosets of $Z(G)$ different from $Z(G)$. On the other-hand the center of each of the remaining proper centralizers will contain exactly $(p-1)$ distinct right cosets of $Z(G)$ other than $Z(G)$. Consequently,
\[
\mid \Cent(G) \mid= \frac{(p^k-1)-v(p^2-1)}{p-1}+v+1=p^{k-1}+p^{k-2}+ \dots+p+2-vp.
\]
\end{proof}

As an immediate corollary we obtain the following result for finite groups of conjugate type $(p^2, 1)$. As we have already mentioned, Ishikawa \cite{ish} proved that I-groups are of class at most $3$.

\begin{cor}\label{cor2}
Let $G$ be a finite group such that $ \mid \frac{G}{C(x)} \mid=p^2$ ($p$ a prime) for all $x \in G \setminus Z(G)$. Then $\mid \Cent(G) \mid=p^{k-1}+p^{k-2}+ \dots+p+2-vp$, where $\mid \frac{G}{Z(G)}\mid=p^k$ and $v$ is the number of centralizers for which $\mid \frac{Z(x)}{Z(G)} \mid=p^2$.
\end{cor}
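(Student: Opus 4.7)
The plan is to deduce Corollary~\ref{cor2} directly from Proposition~\ref{pp1} by verifying its two hypotheses. First I would check that $G$ is an F-group: if $x, y \in G \setminus Z(G)$ satisfy $C(x) \leq C(y)$, then $|G : C(y)|$ divides $|G : C(x)|$, and since both indices equal $p^2$ by hypothesis, we must have $C(x) = C(y)$. Hence the F-group condition is automatic for groups of conjugate type $(p^2, 1)$.

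Next I would establish the bound $|Z(x)/Z(G)| \leq p^2$ for every $x \in G \setminus Z(G)$. This is the content of \cite[Proposition~2]{mann}, which was already invoked (to even stronger effect) in the proof of Proposition~\ref{prop223}: there it forced $|Z(x)/Z(G)| = p$ in the case when $G/Z(G)$ is non-abelian of order $p^3$, but in general for groups of conjugate type $(p^2, 1)$ it delivers exactly the upper bound $|Z(x)/Z(G)| \leq p^2$ required by Proposition~\ref{pp1}.

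With both hypotheses in hand, Proposition~\ref{pp1} applies with $|G/Z(G)| = p^k$ and gives immediately
$$|\Cent(G)| = p^{k-1} + p^{k-2} + \cdots + p + 2 - vp,$$
where $v$ denotes the number of proper centralizers $C(x)$ for which $|Z(x)/Z(G)| = p^2$. The only non-routine ingredient is the bound $|Z(x)/Z(G)| \leq p^2$, and that is the main obstacle; a fully self-contained argument would need to exploit the F-group partition $\{Z(x_i)/Z(G)\}$ of $G/Z(G)$ together with the fact that $|C(x)/Z(G)| = p^{k-2}$ and that $C(x)/Z(x)$ cannot be cyclic of order $p$, but the cleanest path is simply to cite Mann's result and let Proposition~\ref{pp1} finish the computation.
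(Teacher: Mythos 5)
Your proposal matches the paper's proof essentially verbatim: both deduce the formula by feeding the bound $\mid \frac{Z(x)}{Z(G)}\mid \leq p^2$, obtained from Mann's results on extreme elements of $p$-groups, into Proposition \ref{pp1} (your explicit verification of the F-group hypothesis, which is immediate since all proper centralizers share the same index $p^2$, is a point the paper leaves tacit). The one discrepancy is purely bibliographic: the paper cites \cite[Proposition 1]{mann} for the bound $\leq p^2$, reserving \cite[Proposition 2]{mann} for the sharper equality $\mid \frac{Z(x)}{Z(G)}\mid = p$ used in the non-abelian case of Proposition \ref{prop223}, whereas you attribute the bound to Proposition 2.
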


\begin{proof}
It follows from Proposition \ref{pp1}, noting that in the present scenario, in view of \cite[Proposition 1]{mann},  we have $\mid \frac{Z(x)}{Z(G)} \mid \leq p^2$ for all $x \in G \setminus Z(G)$.
\end{proof}

We have already seen in Proposition \ref{prop223} that if $G$ has class $3$, then $v=0$ and $k=3$;  consequently, 
$\mid \Cent(G) \mid =p^2+p+2$.\\

We now prove the following result which improves \cite[Theorem 3.3]{en09}. A group $G$ is said to be $n$-centralizer if $\mid \Cent(G) \mid=n$.

\begin{prop}\label{kkk}
Let $G$ be a finite $n(=p^2+2)$-centralizer group ($p$ a prime). Then $\mid \frac{G}{C(x)}\mid=p^2$ for all $x \in G \setminus Z(G)$ iff $\frac{G}{Z(G)} \cong C_p \times C_p \times C_p \times C_p$ and $G$ is an F-group.
\end{prop}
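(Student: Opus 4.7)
The plan is to prove each direction of the biconditional separately. The forward direction combines Proposition \ref{prop223} with Corollary \ref{cor2} and a case analysis on the exponent $k$, while the backward direction relies on a partition-counting argument inside $G/Z(G)\cong C_p^4$.

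For the forward direction, assume $|G/C(x)|=p^2$ for every noncentral $x$. Proposition \ref{prop223} applies: its case (b) would give $|\Cent(G)|=p^2+p+2\neq p^2+2$, so case (a) holds, meaning $G/Z(G)\cong C_p^k$ for some $k$. Corollary \ref{cor2} then reduces the hypothesis $|\Cent(G)|=p^2+2$ to
\[
p^{k-1}+p^{k-2}+\cdots+p+2-vp=p^2+2,
\]
where $v$ is the number of proper centralizers with $|Z(x)/Z(G)|=p^2$. I would dispose of this equation case by case: $k=2$ gives $v<0$; $k=3$ forces $v=1$, but $|C(x)/Z(G)|=p$ is cyclic, so $C(x)$ is abelian and $Z(x)=C(x)$ has $|Z(x)/Z(G)|=p$, making $v=0$, a contradiction; $k=4$ gives $v=p^2+1$, the total number of proper centralizers, which is consistent and forces $Z(x)=C(x)$ for every $x$, making $G$ a CA-group and hence an F-group; $k\geq 5$ forces $v>p^2+1$, impossible. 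So $k=4$ and $G$ is an F-group.

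For the backward direction, assume $G$ is an F-group with $G/Z(G)\cong C_p^4$ and $|\Cent(G)|=p^2+2$. By Remark \ref{rem1}, $\Pi=\{Z(x)/Z(G):x\in G\setminus Z(G)\}$ is a partition of $C_p^4$ into $p^2+1$ non-trivial subspaces. Letting $n_i$ denote the number of components of dimension $i\in\{1,2,3\}$, the equations $n_1+n_2+n_3=p^2+1$ and $n_1(p-1)+n_2(p^2-1)+n_3(p^3-1)=p^4-1$ simplify to $n_1=n_3p$ and $n_2+n_3(p+1)=p^2+1$. The principal obstacle is to rule out $n_3\geq 1$. If a $3$-dimensional component $H$ existed, then for any $2$-dimensional component $K$, the partition requirement $H\cap K=0$ would force $\dim(H+K)=5>4$, impossible. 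Hence $n_3\geq 1$ forces $n_2=0$, whence $n_3(p+1)=p^2+1=(p+1)(p-1)+2$ requires $(p+1)\mid 2$, which fails for every prime $p$. Therefore $n_3=0$, $n_1=0$, $n_2=p^2+1$, and every component of $\Pi$ has order $p^2$.

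Finally, I would deduce $|G/C(x)|=p^2$. For any $y\in C(x)\setminus Z(G)$, the subgroup $Z(y)=Z(C(y))$ commutes with every element of $C(y)$, and since $x\in C(y)$ this gives $Z(y)\subseteq C(x)$. Thus $C(x)/Z(G)$ is a subgroup of $C_p^4$ containing $Z(x)/Z(G)$ and expressible as a union of components of $\Pi$. Its order is $p^2$, $p^3$, or $p^4$: the case $p^4$ would give $C(x)=G$, contradicting $x\notin Z(G)$, while the case $p^3$ would require a second $2$-dimensional component $P_j\subseteq C(x)/Z(G)$ disjoint from $Z(x)/Z(G)$, whence $P_j+Z(x)/Z(G)=C_p^4\not\subseteq C(x)/Z(G)$, impossible. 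Hence $C(x)/Z(G)=Z(x)/Z(G)$ has order $p^2$ and $|G/C(x)|=p^2$, completing the proof.
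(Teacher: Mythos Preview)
Your proof is correct and takes a genuinely different route from the paper's in both directions.

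For the forward direction, the paper also invokes Proposition~\ref{prop223} to obtain that $G/Z(G)$ is elementary abelian, but then pins down the rank by an external result: writing the $p^2+1$ proper centralizers $X_i$ as a cover of $G$ and appealing to Cohn's theorem on group covers to obtain $G=X_1X_2$ with $X_1\cap X_2=Z(G)$, whence $|G/Z(G)|=p^4$. Your approach instead feeds the counting formula of Corollary~\ref{cor2} back into the hypothesis $|\Cent(G)|=p^2+2$ and eliminates $k\neq 4$ by elementary arithmetic together with the observation that $|C(x)/Z(G)|=p$ forces $C(x)$ abelian. This is longer but entirely self-contained within the paper.

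For the converse, the paper first uses Corollary~\ref{cor1} to deduce that $G$ is a CA-group, then rules out centralizers of index $p$ via Theorem~\ref{ff4} together with an external reference (\cite[Theorem~2.3]{baishya}), and finally applies the counting formula of Proposition~\ref{pp1}. Your argument bypasses all of this: you work directly with the partition $\Pi$ of $C_p^4$ into $p^2+1$ nontrivial subspaces, solve the dimension equations, use the incompatibility of a $3$-dimensional and a $2$-dimensional component to force $n_3=0$ (hence $n_1=0$, $n_2=p^2+1$), and then show $C(x)/Z(G)$ is itself a union of components, whence it must coincide with the single component $Z(x)/Z(G)$. This is a cleaner and more elementary argument that avoids both external citations. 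The key observation that $Z(y)\subseteq C(x)$ whenever $y\in C(x)$ (since $x\in C(y)$ and $Z(y)$ centralises $C(y)$) is exactly what is needed and is correctly justified.
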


\begin{proof}
Suppose $\mid \frac{G}{C(x)}\mid=p^2$ for all $x \in G \setminus Z(G)$. Then $G$ is an F-group.
In view of Proposition \ref{prop223},  $\frac{G}{Z(G)}$ is elementary abelian. Let $X_i=C(x_i), 1\leq i \leq n-1$ where $x_i \in G \setminus Z(G)$. We have $G=\overset{n-1}{\underset{i=1}{\cup}}X_i$ and  $\mid G \mid=\overset{n-1}{\underset{i=2}{\sum}} \mid X_i \mid$. Therefore interchanging $X_i$'s and applying  \cite[Cohn's Theorem]{cohn}, we have $G=X_1X_2$ and $X_1 \cap X_2=Z(G)$. Hence $\frac{G}{Z(G)} \cong C_p \times C_p \times C_p \times C_p$.

Conversely, suppose $\frac{G}{Z(G)} \cong C_p \times C_p \times C_p \times C_p$ and $G$ is a finite F-group. In view of Corollary \ref{cor1}, $G$ is a CA-group.  Now, suppose $\mid \frac{G}{C(x)}\mid=p$ for some $x \in G \setminus Z(G)$. Then in view of Theorem \ref{ff4} and \cite[Theorem 2.3]{baishya}, $\mid \Cent(G) \mid \neq p^2+2$ (noting that in the present scenario $C(x) \unlhd G$). Therefore $\mid \frac{Z(x)}{Z(G)} \mid \leq p^2$ for all $x \in G \setminus Z(G)$ and consequently, using Proposition  \ref{pp1} we have $\mid \frac{G}{C(x)}\mid=p^2$ for all $x \in G \setminus Z(G)$.
\end{proof}

\begin{rem}\label{rem12}
Let $G$ be a finite group and let $H$ be a subgroup of $G$. Then $(G, H)$ is called a Camina pair if $x$ is conjugate in $G$ to $xz$ for all $x \in G \setminus H$. A finite group $G$ is called a Camina group if $(G, G')$ is a Camina pair.

Recall that a $p$-group ($p$ a prime) $G$ is semi-extraspecial, if $G$ satisfies the property for every maximal subgroup $N$ of $Z(G)$ that $\frac{G}{N}$ is an extraspecial group. It is known that every semi-extraspecial group is a special group (\cite[Pp.5]{lewis}) and for such groups $\mid C(x) \mid$ is equal to the index $[G: G']$ for all $x \in G \setminus G'$ (\cite[Theorem 5.5]{lewis}). Following \cite[Theorem 5.2]{lewis}, a group $G$ is a semi-extraspecial $p$-group for some prime $p$ if and only if $G$ is a Camina group of nilpotence class $2$. A group $G$ is said to be ultraspecial if $G$ is semi-extraspecial and $\mid G' \mid= \sqrt{\mid G: G' \mid}$.
It is known that for each prime $p$ there are $p+3$ ultraspecial groups of order $p^6$ and all of the ultraspecial groups of order $p^6$ (including for $p=2$) are isoclinic  (\cite[Pp. 9] {lewis}). 
\end{rem}

In view of the above discussions using Remark \ref{rem2} and Proposition \ref{f1}, we have the following result for an ultraspecial group of order $p^6$, $p$ a prime.

\begin{prop}\label{ultra}
If $G$ is an   ultraspecial group of order $p^6$ for some prime $p$, then $\mid \Cent(G) \mid =\mid z-class \mid=p^2+2$.
\end{prop}

It may be mentioned here that the authors in \cite [Remark 3.15]{rahul1} gave an example of a group to show that the conditions (1) and (2) in \cite [Theorem 3.13]{rahul1} are not sufficient to attain the bound of the Theorem. They  obtained their conclusion using the proof of \cite [Theorem 3.13]{rahul1} without computing $\mid z-class \mid$. However, in view of the above Proposition we can give a larger family of examples of groups (namely, any ultraspecial group of order $p^6$, $p$ a prime) with the precise value of 
$\mid z-class \mid$ which satisfies the conditions (1) and (2) in \cite [Theorem 3.13]{rahul1}
but does not attain the bound of the Theorem. We also want to mention that the group in \cite [Remark 3.15]{rahul1} is an example of an ultraspecial group of order $p^6$, $p$ a prime. \\

As an immediate application to  Proposition \ref{kkk}, we have the following result:

\begin{prop}\label{rem1234}
Let $G$ be a finite group and $p$ a prime. Then $G$ is $n(=p^2+2)$-centralizer with $\mid \frac{G}{C(x)}\mid=p^2$ for all $x \in G \setminus Z(G)$ if and only if $G$ is isoclinic to an ultraspecial group of order $p^6$.
\end{prop}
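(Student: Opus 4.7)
The plan is to combine Proposition~\ref{kkk} with Ishikawa's classification of $p$-groups of conjugate type $(p^2,1)$ and the structure theory of ultraspecial groups in \cite{lewis}.

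For the forward direction, I would first invoke Proposition~\ref{kkk} under the hypotheses $|\Cent(G)|=p^2+2$ and $|G/C(x)|=p^2$ for all non-central $x$, obtaining $G/Z(G)\cong C_p\times C_p\times C_p\times C_p$ and that $G$ is an F-group. Since $G$ is then of conjugate type $(p^2,1)$, Ito's theorem writes $G=A\times P$ with $A$ abelian and $P$ a $p$-group of the same type, so $G$ is isoclinic to $P$. Theorems~\ref{ff1} and~\ref{ff3} let me replace $P$ by an isoclinic finite $p$-group $H$ with $Z(H)\subseteq H'$; because $H/Z(H)\cong C_p^4$ is abelian, $H$ has nilpotency class $2$ and so $H'\subseteq Z(H)$, giving $Z(H)=H'$. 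Since isoclinism preserves $|G:C(x)|$ on corresponding non-central cosets, $H$ still has conjugate type $(p^2,1)$.

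Next I would appeal to the class-$2$ case of Ishikawa's classification \cite{ish1} of $p$-groups of conjugate type $(p^2,1)$ (the companion of the class-$3$ case used in Proposition~\ref{prop223}) to conclude $|H|=p^6$, whence $|H'|=|Z(H)|=p^2$ and $|H:H'|=p^4$. In class $2$ one has $x^H\subseteq xH'$ for every $x\in H\setminus H'$, and $|x^H|=|H:C_H(x)|=p^2=|H'|=|xH'|$, so $x^H=xH'$; that is, $H$ is a Camina group of class $2$. By \cite[Theorem 5.2]{lewis} this identifies $H$ as semi-extraspecial, and $|H'|^2=p^4=|H:H'|$ upgrades it to ultraspecial of order $p^6$, so $G$ is isoclinic to an ultraspecial group of order $p^6$. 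For the reverse direction, I would suppose $G$ is isoclinic to an ultraspecial group $H$ of order $p^6$: then $|Z(H)|=|H'|=p^2$, and the semi-extraspecial centralizer formula \cite[Theorem 5.5]{lewis} gives $|C_H(y)|=|H:H'|=p^4$, i.e.\ $|H/C_H(y)|=p^2$, for every $y\in H\setminus H'=H\setminus Z(H)$. Since isoclinism induces a bijection between non-central cosets of $Z(G)$ and $Z(H)$ that preserves the centralizer index (cf.\ \cite{hall} and Theorem~\ref{ff2}), the same equality $|G/C_G(x)|=p^2$ holds for every $x\in G\setminus Z(G)$.

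The main obstacle is the appeal to Ishikawa's classification to pin down $|H|=p^6$; absent that input, one would need to analyse the commutator pairing $H/Z(H)\times H/Z(H)\to H'$ directly, since the F-group (in fact CA, by Corollary~\ref{cor1}) structure endows $H/Z(H)\cong\mathbb{F}_p^4$ with a spread into $p^2+1$ isotropic $2$-dimensional subspaces, and one would have to argue that an alternating pairing admitting such a spread must have $2$-dimensional image. Ishikawa's classification is precisely what supplies this linear-algebra input.
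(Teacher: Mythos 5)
Your proof is correct and takes essentially the same route as the paper's: the paper likewise reduces to class $2$ via Proposition \ref{prop223}, then cites Ishikawa's classification \cite[Theorem 4.1]{ish1} together with Proposition \ref{kkk} and the semi-extraspecial/ultraspecial facts of Remark \ref{rem12}, so your stem-group construction via Theorems \ref{ff1} and \ref{ff3}, the Camina counting $x^H=xH'$, and the appeals to \cite[Theorems 5.2 and 5.5]{lewis} simply spell out what the paper's one-line proof compresses. One calibration: Ishikawa's class-$2$ case does not by itself yield $\mid H \mid = p^6$ --- the class-$2$ family of conjugate type $(p^2,1)$ contains stem groups of order $p^8$ and larger (e.g.\ central quotients of the Heisenberg group over $\mathbb{F}_{p^3}$ by a subgroup of order $p$) --- rather it yields $\mid Z(H)\mid=\mid H'\mid=p^2$, and it is Proposition \ref{kkk}'s conclusion $\mid H/Z(H)\mid=p^4$ that caps the order at $p^6$; since your argument already has that ingredient in hand, the step is sound as used, merely attributed a bit too generously to Ishikawa.
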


\begin{proof}
In view of Proposition \ref{prop223}, $\frac{G}{Z(G)}$ is abelian. Therefore $G$ is nilpotent of class $2$ and hence by \cite[Theorem 4.1]{ish1}, Proposition \ref{kkk} and Remark \ref{rem12} we have the result.
\end{proof}

In \cite[Theorem 3.4]{ctc09}, the author proved that if $G$ is a finite $6$-centralizer group,   then $\frac{G}{Z(G)} \cong D_8, A_4, C_2 \times C_2 \times C_2$ or $C_2 \times C_2 \times C_2 \times C_2$. In this connection, we have the following result:

\begin{prop}
A finite group $G$ is $6$-centralizer with $\mid \frac{G}{Z(G)} \mid=16$ if and only if $G$ is isoclinic to an ultraspecial group of order $64$. 
\end{prop}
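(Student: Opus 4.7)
The plan is to reduce the proposition to Proposition \ref{rem1234} (applied with $p = 2$), of which the present statement is essentially the specialization to $|G/Z(G)| = 16$.

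For the backward direction, suppose $G$ is isoclinic to an ultraspecial group $H$ of order $64$. Being ultraspecial with $p = 2$, $H$ satisfies $|Z(H)| = |H'| = 4$, whence $|H/Z(H)| = 16$, and Proposition \ref{rem1234} tells us $H$ is a $6$-centralizer group with $|H/C(h)| = 4$ for every non-central $h$. Since isoclinism preserves the central quotient up to isomorphism and preserves the number of centralizers by Theorem \ref{ff2}, we immediately inherit $|G/Z(G)| = 16$ and $|\Cent(G)| = 6$.

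For the forward direction, suppose $G$ is a finite $6$-centralizer group with $|G/Z(G)| = 16$. The classification recalled just before the statement, \cite[Theorem 3.4]{ctc09}, restricts $G/Z(G)$ for any finite $6$-centralizer group to one of $D_8, A_4, C_2 \times C_2 \times C_2$, or $C_2 \times C_2 \times C_2 \times C_2$; the only option of order $16$ is the last, so $G/Z(G) \cong C_2 \times C_2 \times C_2 \times C_2$ and $G$ is nilpotent of class $2$. By Proposition \ref{rem1234}, it now suffices to prove $|G/C(x)| = 4$ for every $x \in G \setminus Z(G)$, and by Proposition \ref{kkk} (converse direction, with $p^2 + 2 = 6$, $p = 2$, and $G/Z(G) \cong C_p^4$ already in hand) this reduces to verifying that $G$ is an F-group.

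The main obstacle is therefore to establish the F-group property intrinsically. Note first that once $G$ is known to be an F-group, the inequality $|\Cent(G)| = 6 \neq 16 = 2^3 + 2^2 + 2 + 2$ together with Corollary \ref{cor1} forces $G$ to be a CA-group, and then the argument in the converse proof of Proposition \ref{kkk} delivers $|G/C(x)| = 4$. To prove $G$ is an F-group, I plan to argue by contradiction: if some non-central $x, y$ satisfy $C(x) \subsetneq C(y)$, analyze the five proper centralizers $\bar C_i = C(x_i)/Z(G)$ as subgroups of $G/Z(G) \cong \mathbb{F}_2^4$ whose orders lie in $\{2, 4, 8\}$ and whose union is $G/Z(G)$, combined with the partition $G \setminus Z(G) = \bigsqcup_{i=1}^{5} F_i$ (where $F_i = \{y : C(y) = C(x_i)\}$) giving the integer equation $\sum_{i=1}^{5} |F_i|/|Z(G)| = 15$ with each summand in $\{1, 3, 7\}$. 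The hardest subcase will be the presence of some $\bar C_i$ of order $8$: then $C(x_i)$ is an abelian normal subgroup of index $2$, so Theorem \ref{ff4} forces $|G'| = 8$, and an arithmetic contradiction with $|\Cent(G)| = 6$ is extracted by mirroring the proof of Proposition \ref{kkk} via \cite[Theorem 2.3]{baishya}. Ruling out every non-F-group configuration yields that $G$ is an F-group, and Propositions \ref{kkk} and \ref{rem1234} then conclude that $G$ is isoclinic to an ultraspecial group of order $64$.
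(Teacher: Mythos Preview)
Your backward direction is fine and essentially matches the paper's.

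For the forward direction, the paper takes a much shorter route: it invokes \cite[Proposition~2.5(a)]{ed09}, which says directly that any finite $6$-centralizer group is a CA-group (hence an F-group). From there the argument is exactly the one you describe as the ``hardest subcase'': rule out an abelian centralizer of index $2$ via Theorem~\ref{ff4} and \cite[Theorem~2.3]{baishya}, then Proposition~\ref{pp1} forces $|G/C(x)|=4$ for every non-central $x$, and Proposition~\ref{rem1234} finishes. So once you accept the external CA-group input, your reduction through Propositions~\ref{kkk} and \ref{rem1234} lines up with the paper.

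Your attempt to prove the F-group property intrinsically, however, has a genuine gap. The assertion that each $|F_i|/|Z(G)|$ lies in $\{1,3,7\}$ is equivalent to saying $F_i\cup Z(G)$ is a subgroup, namely $Z(x_i)$. But the equality $F_i\cup Z(G)=Z(x_i)$ is precisely Remark~\ref{rem1}'s characterization of F-groups, so assuming it in a proof-by-contradiction that $G$ is an F-group is circular. Concretely, if $C(x_i)\subsetneq C(x_j)$ then $Z(x_j)\subsetneq Z(x_i)$, and every $y\in Z(x_j)\setminus Z(G)$ satisfies $C(y)\supseteq C(x_j)\supsetneq C(x_i)$, so $y\notin F_i$; thus $F_i$ is $Z(x_i)\setminus Z(G)$ with at least one nontrivial coset of $Z(G)$ removed, and $|F_i|/|Z(G)|$ need not be of the form $2^m-1$. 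With summands merely constrained to $\{1,\dots,7\}$, the equation $\sum_{i=1}^5 |F_i|/|Z(G)|=15$ does not eliminate non-F-group configurations, and your case analysis does not get started. Either carry out a genuinely different combinatorial elimination, or, as the paper does, cite \cite[Proposition~2.5(a)]{ed09} and move on.
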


\begin{proof}
Suppose $\mid \Cent(G) \mid =6$.  Using \cite[Proposition 2.5 (a)]{ed09}, $G$ is a CA-group. Now, suppose $\mid \frac{G}{C(x)}\mid=2$ for some $x \in G \setminus Z(G)$. Then in view of Theorem \ref{ff4} and \cite[Theorem 2.3]{baishya}, we have $\mid \Cent(G) \mid \neq 6$. Consequently, by Proposition \ref{pp1}, we have $\mid \frac{G}{C(x)}\mid=4$ for all  $x \in G \setminus Z(G)$. Therefore  by Proposition \ref{rem1234}, $G$ is isoclinic to an ultraspecial group of order $64$. 
Conversely, suppose  $G$ is isoclinic to an ultraspecial group of order $64$. Then by Remark \ref{rem12}, $\mid G' \mid=4$ and $\mid \frac{G}{C(x)}\mid=4$ for all  $x \in G \setminus Z(G)$. Since we have  $\mid \frac{G}{Z(G)} \mid=16$, therefore  by Proposition \ref{f1}, $\mid \Cent(G) \mid = 6$.
\end{proof}

We now compute the number of centralizers of a finite group with maximal centralizers (maximal among the proper subgroups). It may be mentioned here that Kosvintsev in 1973 \cite{kos} studied and characterised these groups. He proved that in a finite nilpotent group $G$ every centralizer is maximal if and only if $G$ is of the conjugate type $(p, 1)$, $p$ a prime. Moreover, he proved that in a finite non-nilpotent group $G$, the centralizer of every non-central element is a maximal subgroup if and only if $G=MZ(G)$, where $M$ is a biprimary subgroup in $G$ that is a Miller-Moreno group.
Recently, in 2020 the authors \cite{zhao} studied these groups. It seems that the authors are unaware of the paper of Kosvintsev \cite{kos}. However, they have given a characterization of such non-nilpotent group $G$ in terms of $\frac{G}{Z(G)}$ by proving that if $G$ is a finite non-nilpotent group of such type, then $\frac{G}{Z(G)}$ is either abelian or $\frac{G}{Z(G)}=\frac{P}{Z(G)} \rtimes \frac{Q}{Z(G)}$ is  a minimal non-abelian group (Miller and Moreno analyzed minimal non-abelian groups in \cite{mm}) with $\mid \frac{P}{Z(G)} \mid =p^a$ and $\mid \frac{Q}{Z(G)} \mid =q$, where $p$ and $q$ are primes. In this connection, we notice that there is a minor error in the result. In this case $\frac{G}{Z(G)}$ cannot be abelian.

\begin{prop}
Let $G$ be a finite group in which every proper centralizer is maximal (maximal among all proper subgroups). 
\begin{enumerate}
	\item  If $G$ is nilpotent, then $\mid \frac{G}{Z(G)} \mid=p^k$ ($p$ a prime) and $\mid \Cent(G) \mid =\mid $z$-class\mid=\frac{p^k-1}{p-1}+1$. Moreover,  $\mid nacent(G)\mid= 1$ iff $\frac{G}{Z(G)} \cong C_p \times C_p$ and $\mid nacent(G)\mid= \mid \Cent(G) \mid$ iff $\mid \frac{G}{Z(G)} \mid > p^2$.
	\item  If $G$ is non-nilpotent, then  $\mid \Cent(G) \mid = \mid \Cent(\frac{G}{Z(G)}) \mid =p^a+2$, where $p^a$ is the order of the Sylow $p$-subgroup of $\frac{G}{Z(G)}$. Moreover, $G$ is a CA-group.
\end{enumerate} 
\end{prop}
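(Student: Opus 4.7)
The plan is to reduce both parts to the structural classifications---Kosvintsev's theorem for groups whose non-central centralizers are maximal, and Miller--Moreno's theorem for minimal non-abelian groups---recalled just before the statement. For part (a), Kosvintsev's theorem immediately gives that $G$ is of conjugate type $(p,1)$, so $\mid\frac{G}{C(x)}\mid = p$ for every $x \in G\setminus Z(G)$; Proposition \ref{prop22} then applies verbatim and delivers, in one stroke, the elementary abelian structure of $\frac{G}{Z(G)}$, the counts $\mid\Cent(G)\mid = \mid z\text{-class}\mid = \frac{p^k-1}{p-1}+1$, and both characterizations of $\mid nacent(G)\mid$.

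For part (b), I would first observe that $\frac{G}{Z(G)}$ cannot be abelian, for otherwise $G$ would be nilpotent of class at most $2$, contradicting the hypothesis; this is the minor correction flagged in the preamble. Kosvintsev's classification then gives $G = MZ(G)$ with $M$ a Miller--Moreno biprimary subgroup, and the Miller--Moreno structure theorem (non-$p$-group case) forces $M = P_M\rtimes Q_M$ with $P_M$ elementary abelian of order $p^a$ and $Q_M$ cyclic of order $q$ acting faithfully and irreducibly on $P_M$. Faithful irreducibility forces $Z(M) = 1$, hence $M\cap Z(G) = 1$ and $G = M\times Z(G)$; in particular $\frac{G}{Z(G)}\cong M$.

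It then remains to enumerate centralizers in $M$. Faithful irreducibility gives $C_M(x) = P_M$ for every non-identity $x\in P_M$, while for $y$ in a Sylow $q$-subgroup $S$ of $M$ the triviality of the $\langle y\rangle$-fixed subspace forces $C_M(y) = S$; a Sylow count (using $N_M(S) = S$) produces exactly $p^a$ such conjugates. Hence $\mid\Cent(M)\mid = 1 + 1 + p^a = p^a + 2$, and the direct-product decomposition transports this to $G$ via $C_G((x,z)) = C_M(x)\times Z(G)$, yielding $\mid\Cent(G)\mid = \mid\Cent(\frac{G}{Z(G)})\mid = p^a + 2$. For the CA property, the proper centralizers of $M$ are $P_M$ (elementary abelian) and the cyclic Sylow $q$-subgroups, all abelian, so the same is true of $G = M\times Z(G)$. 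The main obstacle I anticipate is pinning down the Miller--Moreno structure---elementary abelianness of $P_M$ and faithful irreducibility of the action---which underpins both $Z(M)=1$ and the centralizer enumeration; everything else is bookkeeping via the direct-product decomposition.
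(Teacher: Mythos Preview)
Part (a) is correct and identical to the paper's argument.

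For part (b) there is a genuine gap. The Miller--Moreno structure theorem in the non-$p$-group case does \emph{not} force $Q_M$ to have prime order: a biprimary minimal non-abelian group has the form $M=P_M\rtimes Q_M$ with $P_M$ elementary abelian of order $p^a$ and $Q_M$ cyclic of order $q^b$ for some $b\geq 1$, the action having kernel $\Phi(Q_M)$ and $Z(M)=\Phi(Q_M)$ of order $q^{b-1}$. Kosvintsev's theorem gives no control over $b$, so your claims that the action is faithful, that $Z(M)=1$, and hence that $G=M\times Z(G)$, are unjustified when $b>1$. Correspondingly, the assertion $C_M(y)=S$ for $y$ in a Sylow $q$-subgroup $S$ fails for $y\in Z(M)$.

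The repair is easy and brings you back to the paper's route. From $G=MZ(G)$ one has $M\cap Z(G)=Z(M)$ and $G/Z(G)\cong M/Z(M)$, which \emph{is} a Miller--Moreno group of order $p^a q$ with trivial center; your enumeration then applies verbatim to $M/Z(M)$ and yields $\mid\Cent(G/Z(G))\mid=p^a+2$. To lift this to $G$, note that for $g=mz$ with $m\in M$, $z\in Z(G)$ one has $C_G(g)=C_M(m)Z(G)$, and since $Z(M)\subseteq C_M(m)$ this gives a bijection between proper centralizers in $M$ and in $G$ (no direct product needed). The paper proceeds essentially this way: it quotes the description of $G/Z(G)$ (with $\mid Q/Z(G)\mid=q$) from \cite{zhao}, uses that $G/Z(G)$ has trivial center, and then invokes the maximality hypothesis to identify $C_G(x)/Z(G)$ with $C_{G/Z(G)}(xZ(G))$.
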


\begin{proof}
a) Since $G$ is nilpotent, therefore  in view of \cite{kos}, we have  $\mid \frac{G}{C(x)}\mid=p$ ($p$ a prime) for all $x \in G \setminus Z(G)$. Now, the result follows using Proposition \ref{prop22}.\\

b) In view of \cite[Theorem A]{zhao}, we have $\frac{G}{Z(G)}=\frac{P}{Z(G)} \rtimes \frac{Q}{Z(G)}$ is  a minimal non-abelian group with $\mid \frac{P}{Z(G)} \mid =p^a$ and $\mid \frac{Q}{Z(G)} \mid =q$, where $p$ and $q$ are primes. Moreover, by \cite[Aufgaben III, 5.14]{hupert}, $\frac{G}{Z(G)}$ has trivial center. In the present scenario, we have  $\frac{C(x)}{Z(G)}=C(xZ(G))$ for any $x \in G \setminus Z(G)$ (because $C(x)$ is a maximal subgroup). Hence $\mid \Cent(G) \mid = \mid \Cent(\frac{G}{Z(G)}) \mid =p^a+2$ and $G$ is a CA-group, noting that in the present scenario $G$ is an F-group.
\end{proof}

The authors in \cite[Main Theorem 1.1]{zclass}, gave a necessary and sufficient condition for a finite $p$-group ($p$ a prime) of type $(n, 1)$ to attain the maximal number of $z$-classes.  In the following Theorem we extend this result as follows:

\begin{thm}\label{prop35}
Let $G$ be a finite F-group with  $\mid \frac{G}{Z(G)}\mid=p^k$, where $p$ is a prime. Then 
$G$ has $\frac{p^k-1}{p-1}+1$ $z$-classes if and only if 
\begin{enumerate}
	\item  $\frac{G}{Z(G)}$ is elementary abelian. 
	\item  $Z(x)= \langle x, Z(G)\rangle$ for all $x \in G \setminus Z(G)$.
\end{enumerate} 
\end{thm}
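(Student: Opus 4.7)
The plan is to trap $\mid z-class\mid$ between $\mid \Cent(G)\mid$ and a sharp F-group upper bound on $\mid\Cent(G)\mid$, and then extract the structural consequences from the forced equalities.

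For the forward direction, assume $\mid z-class\mid = \frac{p^k-1}{p-1}+1$. Since $G$ is an F-group, Remark \ref{rem1} tells us that $\lbrace Z(x) \mid x \in G \setminus Z(G)\rbrace$ is a strict $Z(G)$-partition of $G$; passing to $G/Z(G)$, the subgroups $Z(x_i)/Z(G)$ (one per distinct proper centralizer $C(x_i)$) pairwise intersect in the identity coset and jointly cover $G/Z(G)$. Because $G/Z(G)$ is a $p$-group, each $\mid Z(x_i)/Z(G)\mid \geq p$, so
\[
p^k - 1 = \sum_{i=1}^{\mid \Cent(G)\mid-1}\bigl(\mid Z(x_i)/Z(G)\mid-1\bigr) \geq (\mid \Cent(G)\mid-1)(p-1),
\]
giving $\mid \Cent(G)\mid \leq \frac{p^k-1}{p-1}+1$. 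Combined with $\mid z-class\mid \leq \mid \Cent(G)\mid$ from Remark \ref{rem2}, the hypothesis forces equality throughout, so $\mid Z(x)/Z(G)\mid = p$ for every $x \in G \setminus Z(G)$, and (again by Remark \ref{rem2}) every $C(x)$ is normal in $G$.

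Property (2) is then immediate: $\langle x, Z(G)\rangle/Z(G) = \langle xZ(G)\rangle$ is a nontrivial subgroup of the order-$p$ group $Z(x)/Z(G)$, so the two coincide and $Z(x) = \langle x, Z(G)\rangle$. For property (1), normality of $C(x)$ implies normality of $Z(x)$, so $\langle xZ(G)\rangle$ is a normal subgroup of order $p$ in $G/Z(G)$ for every $x$. Hence for any $g \in G$, $gxg^{-1} = x^{i(g,x)}z$ with $z \in Z(G)$, and for fixed $x$ the rule $g \mapsto i(g,x)$ is a homomorphism $G \to (\mathbb{Z}/p\mathbb{Z})^*$ factoring through $G/Z(G)$. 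Since $G/Z(G)$ has exponent $p$ by Theorem \ref{b1} and $\gcd(p,p-1)=1$, this homomorphism is trivial; hence $gxg^{-1}x^{-1} \in Z(G)$ for all $g, x \in G$. Thus $[G,G] \leq Z(G)$, making $G/Z(G)$ abelian; together with exponent $p$, it is elementary abelian.

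For the converse, (2) gives $\mid Z(x)/Z(G)\mid = p$ for all non-central $x$, and Theorem \ref{b1} then yields $\mid \Cent(G)\mid = \frac{p^k-1}{p-1}+1$. Condition (1) forces $G' \leq Z(G)$, so $gxg^{-1} = xz$ with $z \in Z(G)$ and $C(gxg^{-1}) = C(x)$ for all $g, x$; every centralizer is therefore normal, and Remark \ref{rem2} concludes $\mid z-class\mid = \mid \Cent(G)\mid = \frac{p^k-1}{p-1}+1$. The most delicate step is showing $G/Z(G)$ is abelian in the forward direction: the normality of each order-$p$ cyclic subgroup $\langle xZ(G)\rangle$ must be combined with the exponent-$p$ constraint from Theorem \ref{b1} to kill the conjugation action, a conclusion that would fail without controlling the exponent.
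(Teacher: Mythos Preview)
Your proof is correct, and in the forward direction it is genuinely different from (and more self-contained than) the paper's argument. The paper first passes to an isoclinic finite $p$-group via \cite[Lemma 3.1]{rahul1}, transports the $z$-class count via \cite[Theorem 2.2]{rahul1}, and then invokes \cite[Theorem 3.13]{rahul1} to conclude $G/Z(G)$ is elementary abelian before using Remark~\ref{rem2} and Theorem~\ref{b1}. You avoid the isoclinism machinery entirely: the partition inequality $p^k-1\ge(\mid\Cent(G)\mid-1)(p-1)$ together with the trivial bound $\mid z\text{-class}\mid\le\mid\Cent(G)\mid$ pins everything down at once, and your subsequent argument that each $\langle xZ(G)\rangle$ is normal of order $p$ and that the conjugation map into $(\mathbb{Z}/p\mathbb{Z})^*$ must be trivial (by the exponent-$p$ constraint from Theorem~\ref{b1}) is a clean, elementary replacement for the cited structural result from \cite{rahul1}. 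This buys independence from external $p$-group classification, at the cost of a slightly longer direct computation.

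One small imprecision in your converse: you write ``(2) gives $\mid Z(x)/Z(G)\mid=p$'', but (2) alone only gives $Z(x)/Z(G)=\langle xZ(G)\rangle$, whose order is $o(xZ(G))$; you need (1) as well to force this order to be exactly $p$. Since you are assuming both (1) and (2) anyway, this is a matter of phrasing rather than a gap.
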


\begin{proof}
Let $G$ be a finite F-group such that  $\mid \frac{G}{Z(G)}\mid=p^k$ and  
$G$ has $\frac{p^k-1}{p-1}+1$ $z$-classes. By \cite[Lemma 3.1]{rahul1}, $G$ is isoclinic to a finite $p$-group $H$ and by  \cite[Theorem 2.2]{rahul1}, $H$ has $\frac{p^k-1}{p-1}+1$ $z$-classes. In the present scenario, by \cite[Theorem 3.13]{rahul1},  $\frac{H}{Z(H)}$ is elementary abelian and consequently, $\frac{G}{Z(G)}$ is elementary abelian. Hence $C(x) \unlhd G$ for all $x \in G $ and therefore, by Remark \ref{rem2},  $\mid \Cent(G) \mid =\frac{p^k-1}{p-1}+1$. Now, the result follows from Theorem \ref{b1}.

Conversely, suppose (a) and (b) holds. Then $C(x) \unlhd G$ for all $x \in G $ and consequently, by Remark \ref{rem2} and  Theorem \ref{b1} we have $\mid z-class \mid =\frac{p^k-1}{p-1}+1$.
\end{proof}

We conclude the paper with the following result:

\begin{prop}
Let $G$ be a finite group such that $\mid G' \mid=p$ ($p$  a prime) and $G' \subseteq Z(G)$. Then  $G$ is isoclinic to an extraspecial $p$-group of order $p^{2a+1}$ and $\mid \Cent(G) \mid=\mid z-class \mid =\frac{p^{2a}-1}{p-1}+1$.
\end{prop}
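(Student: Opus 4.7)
The plan is to reduce to the case of an extraspecial $p$-group via isoclinism, using Theorems \ref{ff1}--\ref{ff2}, and then apply Proposition \ref{prop227}. First I would invoke Theorem \ref{ff1} to pick a group $H$ isoclinic to $G$ with $Z(H)\subseteq H'$. Isoclinism preserves the derived subgroup (up to isomorphism) and the quotient $G/Z(G)$, hence $|H'|=|G'|=p$ and $H/Z(H)\cong G/Z(G)$ is abelian (since $G'\subseteq Z(G)$); the latter gives $H'\subseteq Z(H)$. Combined with the assumed reverse inclusion, this forces $Z(H)=H'$ of order $p$.

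Next I would show $H$ is an extraspecial $p$-group. Because $|H'|=p$, the commutator map descends to a well-defined alternating bilinear pairing
\[
\omega\colon H/Z(H)\times H/Z(H)\longrightarrow H'=Z(H)\cong\mathbb{F}_p,\qquad \omega(xZ(H),yZ(H))=[x,y].
\]
Non-degeneracy is automatic: if $\omega(xZ(H),\,\cdot\,)$ is trivial then $x$ commutes with every element of $H$, i.e.\ $x\in Z(H)$. In particular, for each $x\in H$ the pairing $\omega$ shows $x^p\in Z(H)$ (since $[x^p,y]=[x,y]^p=1$), so $H/Z(H)$ has exponent $p$ and, being abelian, is an $\mathbb{F}_p$-vector space. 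A non-degenerate alternating form over $\mathbb{F}_p$ exists only on even-dimensional spaces, so $|H/Z(H)|=p^{2a}$ for some $a\geq 1$, whence $|H|=p^{2a+1}$ and $H$ is extraspecial.

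Now the centralizer count for $G$ follows immediately: by Theorem \ref{ff2}, $|\Cent(G)|=|\Cent(H)|$, and Proposition \ref{prop227} applied to $H$ gives $|\Cent(H)|=\frac{p^{2a}-1}{p-1}+1$. For the $z$-class count I would not go through $H$ at all; instead I would note that $G'\subseteq Z(G)$ means $G$ has nilpotency class at most $2$, and for any $x\in G$ and $g\in G$, conjugation acts on $C(x)$ by $z\mapsto z[z,g^{-1}]$ with $[z,g^{-1}]\in G'\subseteq Z(G)\subseteq C(x)$, so $C(x)\lhd G$. By Remark \ref{rem2} this yields $|z\text{-class}(G)|=|\Cent(G)|$, completing the equality.

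The only genuine step, and the mild obstacle, is verifying that $K/Z(K)$ has even $\mathbb{F}_p$-dimension, which rests on the standard fact that non-degenerate alternating forms exist only in even dimension; everything else is either a direct consequence of isoclinism invariance or a straight application of the results already cited in the paper.
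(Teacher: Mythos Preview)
Your argument is correct and follows the same overall strategy as the paper---reduce via isoclinism to an extraspecial $p$-group and then invoke Proposition~\ref{prop227}---but the execution differs in a few places worth noting.

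The paper first writes $G=A\times H$ with $A$ abelian and $H$ the Sylow $p$-subgroup (using that a finite class-$2$ group is nilpotent), then applies Theorems~\ref{ff1} and~\ref{ff3} to $H$ to obtain a finite $p$-group $H_1$ with $Z(H_1)\subseteq H_1'$; you bypass the Sylow step and go straight from $G$ to $H$. To see that the isoclinic group is extraspecial, the paper cites \cite[Lemma~9, p.~77]{kiture5} for the equality of exponents of $H_1'$ and $H_1/Z(H_1)$, whereas you argue directly via the commutator pairing, obtaining exponent $p$ from $[x^p,y]=[x,y]^p=1$ and even dimension from non-degeneracy of the alternating form; this is more self-contained and also explains the shape $p^{2a+1}$ of the order, which the paper leaves implicit. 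Finally, for the $z$-class count the paper relies (somewhat tacitly) on isoclinism invariance of $z$-classes together with Proposition~\ref{prop227}, while you stay inside $G$ and use Remark~\ref{rem2} after observing that every centralizer is normal because $G/Z(G)$ is abelian; this is cleaner and avoids an extra citation. Both routes are valid; yours trades a couple of external references for short direct computations.
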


\begin{proof}
Since $G' \subseteq Z(G)$ therefore $G=A \times H$, where $A$ is an abelian subgroup and $H$ is the Sylow $p$-subgroup of $G$ with $\mid H' \mid=p$. Consequently, $G$ is isoclinic to $H$. In the present scenario, by Theorem \ref{ff1} and Theorem \ref{ff3}, $H$ is isoclinic to a finite $p$-group $H_1$ with $Z(H_1) \subseteq H_1'$. Therefore we have $\mid Z(H_1) \mid=\mid H_1' \mid=p$. It now follows that $H_1'$ and $\frac{H_1}{Z(H_1)}$ have same exponent (by \cite[Lemma 9, p. 77]{kiture5}). Thus $H_1$ is an extraspecial $p$-group of order $p^{2a+1}$ and $G$ is isoclinic to $H_1$. Moreover,  by Proposition \ref{prop227} and Theorem \ref{ff2} we have $\mid \Cent(G) \mid=\mid z-class \mid =\frac{p^{2a}-1}{p-1}+1$.

\end{proof}

\section*{Acknowledgment}

I would like to thank Prof. Mohammad Zarrin for his valuable suggestions and comments on the earlier draft of the paper.


\begin{thebibliography}{33}

\bibitem{ed09}
A. Abdollahi, S. M. J. Amiri and A. M. Hassanabadi,  {\em Groups with specific number of centralizers}, Houst. J. Math., \textbf{33} (1) (2007), 43--57.


\bibitem{kiture5}
J. L. Alperin and R. B. Bell,  {\em Groups and representations}, Graduate Texts in Mathematics,  \textbf{162} (Spinger-Verlag, NY, 19951).


\bibitem{amiriF}
 S. M. J. Amiri, H. Madadi and H. Rostami,  {\em On F-Groups with the central factor of order $p^4$},  Math. Slovaca, \textbf{67} (5)  (2017), 1147--1154.



\bibitem{amiriF1}
 S. M. J. Amiri, H. Madadi and H. Rostami,  {\em Finite groups with certain number of centralizers},  Third Biennial International Group Theory Conference, Mashhad 2015.

\bibitem{nab}
 S. M. J. Amiri and H. Rostami,  {\em Groups with a few non-abelian centralizers},  Publ. Math. Debrecen, \textbf{87} (3/4)  (2015), 429--437.





\bibitem{zclass}
 S. Arora and K. Gongopadhyay,  {\em $z$-Classes in finite group of conjugate type $(n, 1)$},  Proc. Indian Acad. Sci. (Math. Sci.), \textbf{128} (31)  (2018), 1--7.



\bibitem{en09}
 A. R. Ashrafi, {\em On finite groups with a given number of centralizers}, Algebra Colloq., \textbf{7} (2) (2000), 139--146.
 
 
 \bibitem{ctc09}
A. R. Ashrafi, {\em Counting the centralizers of some finite groups}, Korean  J. Comput. Appl. Math., \textbf{7} (1) (2000), 115--124.
 

\bibitem{baishya}
S. J. Baishya, {\em On  finite groups with specific number of centralizers}, Int. Elect. J. Algebra, \textbf{13} (2013) 53--62.




\bibitem{ctc092}
S. M. Belcastro and G. J. Sherman, {\em Counting centralizers in finite groups}, Math. Magazine, \textbf{67} (5) (1994), 366--374.



\bibitem{zumud}
Y. G. Berkovich and   E. M. Zhmud$^{\prime}$,   \textit{Characters of Finite Groups}. Part 1, Transl. Math. Monographs {\bf 172}, Amer. Math. Soc., Providence, RI, 1998. 


\bibitem{brough}
J. Brough, {\em Central intersections of element centralizers}, Asian-European J. Math.,  \textbf{11} (5) (2018), (11 pages).


\bibitem{cohn}
J. H. E. Cohn, {\em On $n$-sum groups}, Math. Scand, \textbf{75} (1994), 44--58.



\bibitem{gon1}
K. Gongopadhyay, {\em The $z$-classes of quaternionic hyperbolic isometries}, J. Group Theory,  \textbf{16} (6) (2013), 941--964.


\bibitem{gon2}
K. Gongopadhyay and R. S. Kulkarni, {\em  $z$-classes of isometries of the hyperbolic space}, Conform. Geom. Dyn.,  \textbf{13}  (2009), 91--109.


\bibitem{gon3}
K. Gongopadhyay and R. S. Kulkarni, {\em  The $z$-classes of isometries}, J. Indin Math. Soc.,  \textbf{81} (3--4) (2014), 245--258.


\bibitem{hall}
P. Hall, {\em The classification of prime-power groups},  J. Reine Angew Math.,  \textbf{182} (1940), 130--141.


\bibitem{hupert}
B. Huppert, {\em Endliche Gruppen, I},  (Springer-Verlag, Berlin 1967).





\bibitem{ish}
K. Ishikawa, {\em On finite $p$-groups which have only two conjugacy lengths}, Israel J. Math.,  \textbf{129} (2002), 119--123.


\bibitem{ish1}
K. Ishikawa, {\em Finite $p$-groups upto isoclinism, Which have only two conjugacy lengths},  J. Algebra,  \textbf{220} (1999), 333--345.


\bibitem{ito}
N. Ito, {\em On finite groups with given conjugate type, I}, Nagoya J. Math.,  \textbf{6} (1953), 17--28.

\bibitem{rahul2}
R. D. Kitture, {\em Groups with finitely many centralizers}, Bull. Allahabad Math. Soc., \textbf{30}  (2015), 29--37.

\bibitem{rahul3}
R. D. Kitture, {\em $z$-classes in finite $p$-groups}, Ph.D thesis.  (2014) (University of Pune).



\bibitem{con}
K. Khoramshahi and M. Zarrin, {\em Groups with the same number of centralizers}, J. Algebra Appl. https://doi.org/10.1142/S0219498821500122.




\bibitem{kos}
L. F. Kosvintsev, {\em Finite groups with maximal element centralizers}, Mathematical Notes of the Academy of Sciences of USSR \textbf{13}  (1973), 349--350.




\bibitem{kul}
R. S. Kulkarni, {\em Dynamics of linear and affine maps}, Asian J. Math.,  \textbf{12} (3) (2008), 321--344.


\bibitem{kul1}
R. S. Kulkarni, {\em Dynamical types and conjugacy classes of centralizers in groups }, J. Ramanujan Math. Soc.,  \textbf{22} (1) (2007), 35--36.




\bibitem{rahul1}
R. Kulkarni, R. D. Kitture and V. S. Yadav, {\em $z$-classes in groups}, J. Algebra Appl., \textbf{15} (2016), 01--16.





\bibitem{rahul4}
V. S. Jadhav and R. D. Kitture, {\em $z$-classes in finite $p$-groups of order $\leq p^5$}, Bull. Allahabad Math. Soc., \textbf{29} (2) (2014), 173--194.








\bibitem{mm}
G. A. Miller and H. C. Moreno, {\em Non-abelian groups in which every subgroup is abelian}, Trans. Amer. Math. Soc., \textbf{4}  (1903), 398--404.





\bibitem{lewis}
M. L. Lewis, {\em Semi extraspecial groups}, Advances in algebra, Springer Proc. Math. Stat., \textbf{277}, Springer, Cham,  (2019), 219--237.


\bibitem{mann}
A. Mann, {\em Extreme elements op finite $p$-groups}, Rend. Sem. Mat. Univ. Padova,  \textbf{83} (1990), 45--54.

\bibitem{reb}
J. Rebmann, {\em F-gruppen}, Arch. Math.,  \textbf{22} (1971), 225--230.


\bibitem{part}
L. J. Taghvasani and M. Zarrin, {\em Criteria for nilpotency of groups via partitions}, Math.  Nachr.,  \textbf{291} (17--18) (2018), 2585--2589.



\bibitem{tom}
M. J. Tomkinson, {\em Groups covered by finitely many cosets or subgroups}, Comm.  Algebra,  \textbf{15} (4)  (1987), 854--859.

  

\bibitem{zhao}
X. Zhao, R. Chen, X. Guo {\em Groups in which the centralizer of any non-central element is maximal}, J. Group Theory,  \textbf{23} (2020), 871--878.





\bibitem{zappa}
G. Zappa, {\em Partitions and other coverings of finite groups}, Illinois. J. Math., \textbf{47} (1/2) (2003), 571--580.

\bibitem{zarrin094}
M. Zarrin, {\em On element centralizers in finite groups}, Arch. Math.,  \textbf{93} (2009), 497--503.


\bibitem{jaa4}
M. Zarrin, {\em Derived length and centralizers of groups}, J. Algebra Appl., \textbf{14} (8) (2015), 01--04.





\bibitem{non}
M. Zarrin, {\em On non-commuting sets and centralizers in infinite groups}, Bull. Aust. Math. Soc., \textbf{93} (2016), 42--46.




\bibitem{gap}
The GAP Group, GAP- Groups, Algorithoms and Programming, Version 4.6.4, http://www.gap-system.org, 2013. 




\end{thebibliography}
\end{document}